\newcommand{\preprint}[1]{}
\newcommand{\hide}[1]{}
\numberwithin{equation}{section}
\theoremstyle{plain}
\newtheorem{thm}{Theorem}[section]
\newtheorem{lem}[thm]{Lemma}
\theoremstyle{definition}
\newtheorem{defi}[thm]{Definition}
\newtheorem{example}[thm]{Example}
\newtheorem{rem}[thm]{Remark}
\newcommand{\C}{{\mathcal C}}
\newcommand{\D}{{\mathcal D}}
\newcommand{\F}{{\mathcal F}}
\newcommand{\FF}{{\mathbb F}}
\newcommand{\GG}{{\mathbb G}}
\newcommand{\Gerbes}{{\rm Gerbes}}
\newcommand{\HH}{{\mathbb H}}
\newcommand{\LB}{{\mathcal L}}
\newcommand{\fM}{{\mathfrak M}}
\newcommand{\Out}{{\rm Out}}
\newcommand{\SheafOut}{{\mathcal Out}}
\renewcommand{\P}{{\mathcal P}}
\newcommand{\PP}{{\mathbb P}}
\newcommand{\U}{{\mathcal U}}
\newcommand{\V}{{\mathcal V}}
\newcommand{\X}{{\mathcal X}}
\newcommand{\Y}{{\mathcal Y}}
\newcommand{\Z}{{\mathcal Z}}
\newcommand{\RealNumbers}{{\mathbb R}}
\newcommand{\Integers}{{\mathbb Z}}
\newcommand{\ComplexNumbers}{{\mathbb C}}
\newcommand{\RightArrowOf}[1]{\stackrel{#1}{\rightarrow}}
\newcommand{\restricted}[2]{#1_{\mid_{#2}}}
\newcommand{\rank}{{\rm rank}}
\newcommand{\Hom}{{\rm Hom}}
\newcommand{\Isom}{{\rm Isom}}
\newcommand{\Aut}{{\rm Aut}}
\newcommand{\Band}{{\rm Band}}
\newcommand{\Diff}{{\rm Diff}}
\newcommand{\SheafHom}{{\mathcal H}om}
\newcommand{\SheafAut}{{\mathcal Aut}}
\newcommand{\SheafIsom}{{\mathcal Isom}}
\begin{document}
\title[Universal families of marked compact hyperk\"{a}hler manifolds]
{On the existence of universal families of marked irreducible holomorphic symplectic manifolds}
\author{Eyal Markman}
\address{Department of Mathematics and Statistics, 
University of Massachusetts, Amherst, MA 01003, USA}
\email{markman@math.umass.edu}
\subjclass[2000]{14J10,14D22}
\keywords{Universal family, Compact hyperk\"{a}hler manifolds, Gerbe}
\date{\today}

\begin{abstract}
We prove the existence of a global family with natural universal properties over every component of the moduli space of marked irreducible holomorphic symplectic manifolds. The analogous  result follows for the Teichm\"{u}ller spaces.
\end{abstract}

\maketitle
\tableofcontents

%
\section{Introduction}

An {\em irreducible holomorphic symplectic manifold} is a simply connected compact K\"{a}hler manifold $X$, such that 
$H^0(X,\Omega^2_X)$ is one dimensional spanned by an everywhere non-degenerate holomorphic $2$-form
\cite{beauville-varieties-with-zero-c-1,huybrechts-basic-results}. 
The second integral cohomology of $X$ is endowed with a non-degenerate integral symmetric bilinear pairing of signature $(3,b_2-3)$,
where $b_2$ is the second Betti number of $X$. The pairing is known as the {\em Beauville-Bogomolov-Fujiki} pairing.
A {\em marking} for $X$ is an isometry $\eta:H^2(X,\Integers)\rightarrow \Lambda$ with a fixed lattice $\Lambda$. 
An isomorphism of two marked pairs $(X_i,\eta_i)$, $i=1,2$, consists of an isomorphism $f:X_1\rightarrow X_2$, such that 
$\eta_1\circ f^*=\eta_2$. 

Given an analytic space $B$ and a discrete group $H$, denote by $\underline{H}_B$ the trivial local system over $B$ with fiber $H$. 
A {\em family of marked irreducible holomorphic symplectic manifolds} over an analytic base $B$ consists of
a family $\pi:\X\rightarrow B$ of such manifolds, together with an isometric trivialization 
$\eta:R^2\pi_*\Integers\rightarrow \underline{\Lambda}_B$. We will call the pair $(\pi,\eta)$  a {\em $\Lambda$-marked} family for short.
Two $\Lambda$-marked families $(\pi:\X\rightarrow B,\eta)$ and $(\tilde{\pi}:\tilde{\X}\rightarrow B,\tilde{\eta})$ 
are {\em isomorphic}, if there exists an isomorphism $f:\X\rightarrow \tilde{\X}$, such that $\tilde{\pi}f=\pi$ and 
$\tilde{\eta}=\eta\circ f^*$, where $f^*:R^2\tilde{\pi}_*\Integers\rightarrow R^2\pi_*\Integers$ is the isomorphism induced by $f$. 
Given a marked family $(\pi:\X\rightarrow B,\eta)$ and a morphism $\kappa:\tilde{B}\rightarrow B$, we get the pulled back family
$\kappa^*(\pi):\X\times_B\tilde{B}\rightarrow \tilde{B}$ with the marking $\kappa^*(\eta)$. 
\hide{
We get a category $\C$, whose objects are such marked families $(\pi:\X\rightarrow B,\eta)$. 
A morphism in this category from $(\pi:\X\rightarrow B,\eta)$ to $(\tilde{\pi}:\tilde{\X}\rightarrow \tilde{B},\tilde{\eta})$
consists of a pair $(\mu,\tilde{\mu})$ of a morphism fitting in a cartesian diagram
\[
\xymatrix{
\X\ar[d]_{\pi}\ar[r]^{\tilde{\mu}}
&
\tilde{\X}\ar[d]^{\tilde{\pi}}
\\
B\ar[r]_{\mu} & \tilde{B}.
}
\]
}
Let $\F_\Lambda$ be the functor, 
from the category of analytic spaces to the category of sets, which associates to an analytic space $B$ the
set of isomorphism classes of $\Lambda$-marked families $(\pi,\eta)$ over $B$.
There exists a non-Hausdorff (disconnected)
complex manifold $\fM_\Lambda$ of dimension $\rank(\Lambda)-2$, 
which coarsely represents $\F_\Lambda$ \cite{huybrechts-basic-results}. 

Let $X$ be an irreducible holomorphic symplectic manifold. Denote by $\Aut_0(X)$ the subgroup, of the auotomorphism group 
$\Aut(X)$ of $X$, consisting of elements which act trivially on $H^2(X,\Integers)$. Huybrechts proved that 
$\Aut_0(X)$ is a finite group \cite[Prop. 9.1(v)]{huybrechts-basic-results}.
Associated to every family $\pi:\X\rightarrow B$ of irreducible holomorphic symplectic manifolds we have a local system
$\SheafAut_0(\pi)$ over $B$, whose fiber over $b\in B$ is the group $\Aut_0(X_b)$ of the fiber $X_b$ of $\pi$ over $b$,
by \cite[Theorem 2.1]{hassett-tschinkel-lagrangian-planes}. 

Given an $\SheafAut_0(\pi)$ torsor $\P$ over $B$, we get the family $\tilde{\X}:=\X\times^{\SheafAut_0(\pi)}\P$. 
Denote by $\tilde{\pi}:\tilde{\X}\rightarrow B$ the natural projection. Note that the local systems
$R^2\pi_*\Integers$ and $R^2\tilde{\pi}_*\Integers$ are naturally isomorphic, and so 
a marking $\eta$ for the former induces a marking  
$\tilde{\eta}$
for the latter.
We denote this new marked family by 
$(\tilde{\pi},\tilde{\eta}):=(\pi,\eta)\times\P$.
Two $\Lambda$-marked families $(\pi:\X\rightarrow B,\eta)$ and $(\tilde{\pi}:\tilde{\X}\rightarrow B,\tilde{\eta})$ 
are said to be {\em equivalent}, if there exists an $\SheafAut_0(\pi)$ torsor $\P$ over $B$, such that 
$(\tilde{\pi},\tilde{\eta})=(\pi,\eta)\times\P$. 
The map $\P\mapsto (\pi,\eta)\times\P$ is a bijection between the set 
$H^1(B,\SheafAut_0(\pi))$, of isomorphism classes of $\SheafAut_0(\pi)$ torsors, and the set of isomorphism classes of 
$\Lambda$-marked families equivalent to $(\pi,\eta)$.
Its inverse sends $(\tilde{\pi},\tilde{\eta})$ to 
the isomorphism class of the $\SheafAut_0(\pi)$ torsor 
$\SheafIsom((\pi,\eta),(\tilde{\pi},\tilde{\eta}))$  of local isomorphisms of the two families compatible with the markings. 
We elaborate on this bijection in Remark \ref{rem-principal-G-bunle-invariant}.

Let $\underline{\Lambda}$ be the trivial local system over $\fM_\Lambda$ with fiber $\Lambda$.
The main result of this paper is the following statement.

\begin{thm}
\label{thm-main}
There exists  a family $\pi:\X\rightarrow \fM_\Lambda$ of irreducible holomorphic symplectic manifolds and a marking 
$\eta:R^2\pi_*\Integers\rightarrow \underline{\Lambda}$ satisfying the following universal property.
Given a family of $\Lambda$-marked irreducible holomorphic symplectic manifolds $(\tilde{\pi}:\tilde{\X}\rightarrow B,\tilde{\eta})$
over an analytic space $B$, the pullback $(\kappa^*(\pi),\kappa^*(\eta))$ via the classifying morphism 
$\kappa\nolinebreak :\nolinebreak B\nolinebreak \rightarrow \nolinebreak \fM_\Lambda$
is equivalent to $(\tilde{\pi},\tilde{\eta})$.
The marked family $(\pi,\eta)$ satisfying this property is unique, up to isomorphism. $\SheafAut_0(\pi)$ restricts to each connected component of $\fM_\Lambda$ as a trivial local system.
\end{thm}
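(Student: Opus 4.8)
The plan is to construct the universal family by a descent/gluing argument from the standard local structure of $\fM_\Lambda$. By Huybrechts' description of $\fM_\Lambda$ as a coarse moduli space, every point $[(X,\eta)]\in\fM_\Lambda$ admits an open neighborhood $U$ over which there exists a $\Lambda$-marked family $(\pi_U:\X_U\to U,\eta_U)$ representing the restriction of $\F_\Lambda$ — indeed one takes a representative of the Kuranishi family, whose base is smooth of dimension $\rank(\Lambda)-2$ and whose period map is a local isomorphism onto $U$. The first step is to fix such a cover $\{U_i\}$ and local marked families $(\pi_i,\eta_i)$. On overlaps $U_{ij}=U_i\cap U_j$, the two restrictions $(\pi_i,\eta_i)\vert_{U_{ij}}$ and $(\pi_j,\eta_j)\vert_{U_{ij}}$ classify the same point of $\fM_\Lambda$ pointwise, hence by the coarse moduli property they are \emph{equivalent} in the sense defined above: the sheaf $\SheafIsom((\pi_i,\eta_i),(\pi_j,\eta_j))$ is a torsor under $\SheafAut_0(\pi_i)\cong\SheafAut_0(\pi_j)$ on $U_{ij}$. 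This is where the finiteness of $\Aut_0(X)$ (Huybrechts) and the local system $\SheafAut_0(\pi)$ (Hassett–Tschinkel) enter: the obstruction to gluing the $(\pi_i,\eta_i)$ into a global marked family over $\fM_\Lambda$ lives in $H^2$ of $\fM_\Lambda$ with coefficients in a sheaf of \emph{nonabelian} finite groups, or rather in the relevant nonabelian $\check{\text{C}}$ech $H^2$ built from the gerbe of local marked families.

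The second, and main, step is to kill this gerbe. The key geometric input is that over each component of $\fM_\Lambda$ one has a \emph{tautological} object to rigidify with: the period domain carries the Hodge structure, and more importantly there is a canonical way to produce a global family once one trivializes $\SheafAut_0$. So the strategy is: (i) prove that $\SheafAut_0(\pi)$ restricts to a \emph{trivial} local system on each connected component of $\fM_\Lambda$ — this is the last sentence of the theorem and logically comes first in the proof; (ii) deduce that the gerbe of local marked families, being banded by a \emph{constant} finite group $G=\Aut_0(X)$ on a component, is classified by an element of $H^2(\text{component},\underline{G})$ (or $H^2$ with coefficients in the center plus an outer-action obstruction), and then show this class vanishes. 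For (i), the point is that $\SheafAut_0(\pi)$ is a local system of finite groups whose monodromy is constrained: an element of $\pi_1$ of a component of $\fM_\Lambda$ acts on the fiber $\Aut_0(X)$, but by general principles (cf. the parallel-transport/monodromy formalism the author develops for hyperkähler manifolds, and the fact that $\fM_\Lambda$'s fundamental group is controlled by the monodromy group acting trivially on $\Lambda$) this action must be by inner automorphisms, in fact trivial; one reduces to showing the monodromy representation $\pi_1\to\Aut(\Aut_0(X))$ is trivial, using that automorphisms in $\Aut_0$ act trivially on $H^2$ and hence deform canonically along the Kuranishi family.

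For the vanishing in (ii), the plan is to use that $\fM_\Lambda$ is a (non-Hausdorff) complex manifold of dimension $\rank(\Lambda)-2\geq 2$ but is built from Hausdorff pieces with good topology — in fact each component is connected to a period domain $\Omega_\Lambda$, an open subset of a quadric, which is simply connected and has vanishing relevant higher cohomology. More precisely, one uses the period map $P:\fM_\Lambda\to\Omega_\Lambda$, which is a local biholomorphism and a covering of its image onto each Hausdorff reduction, to pull back a family: it suffices to produce the universal family on $\Omega_\Lambda$ (or a suitable cover) and the gerbe obstruction there vanishes because $H^2(\Omega_\Lambda,\underline{G})=0$ for the simply-connected domain $\Omega_\Lambda$ and finite $G$ — here one invokes that $\Omega_\Lambda$ is homotopy equivalent to a space with no $2$-torsion in degree-$2$ cohomology, or argues directly that any $G$-gerbe on it is trivial. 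Then one transports the resulting global marked family back along $P$ and checks the universal property against an arbitrary $(\tilde\pi,\tilde\eta)$ over $B$ via the classifying map $\kappa$, using again the coarse moduli property to see that $\kappa^*(\pi,\eta)$ and $(\tilde\pi,\tilde\eta)$ agree pointwise hence are equivalent. Uniqueness up to isomorphism follows from the same equivalence-is-pointwise principle applied to two universal families over $\fM_\Lambda$ itself together with rigidity of the identity component. I expect step (i), controlling the monodromy of $\SheafAut_0$, to be the main obstacle, since it requires knowing enough about $\pi_1(\fM_\Lambda)$ and the functoriality of $\Aut_0$ under deformation; the gerbe-vanishing in (ii), while the conceptual heart, should reduce cleanly to a cohomological fact about the period domain once (i) is in place.
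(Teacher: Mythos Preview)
Your overall architecture (local Kuranishi families, gerbe obstruction, reduction to a cohomological statement about the period domain) matches the paper's, but the decisive step (ii) contains a genuine error. You assert that the obstruction vanishes ``because $H^2(\Omega_\Lambda,\underline{G})=0$ for the simply-connected domain $\Omega_\Lambda$'' or because $\Omega_\Lambda$ is homotopy equivalent to something with no relevant degree-$2$ cohomology. This is false: the paper proves (Lemma~\ref{lemma-the-period-domain-is-homotopic-to-the-2-sphere}) that $\Omega_\Lambda$ is homotopy equivalent to a conic $Q_W\cong\PP^1\cong S^2$, so $\check{H}^2(\Omega_\Lambda,Z)\cong Z$ for any abelian group $Z$. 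The obstruction group is \emph{not} zero, and no purely topological argument on $\Omega_\Lambda$ will kill the gerbe class.

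The paper's actual mechanism is geometric, not cohomological: one shows that the restriction map $\check{H}^2(\fM^0_\Lambda,Z)\to\check{H}^2(Tw_{\omega,\eta},Z)$ along a twistor line is an \emph{isomorphism} (combining Lemma~\ref{lemma-pullback-by-the-period-map-is-an-isomorphism} with the homotopy equivalence $\Omega_\Lambda\simeq Q_W$), and then observes that the gerbe class restricts to zero over $Tw_{\omega,\eta}$ because the twistor family furnishes an honest global object there. Thus the class vanishes by injectivity, not because the target group is trivial. Your proposal is missing precisely this input---the existence of a global family over \emph{some} subspace that still sees all of $\check{H}^2$---and without it the argument does not close. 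Two smaller points: the paper reduces to an \emph{abelian} band (the center $Z$ of $G$) by first passing to the auxiliary moduli space $\fM_{\Lambda,G}$ of triples $(X,\eta,\psi)$, which cleanly avoids the nonabelian gerbe issues you allude to; and your step (i) is handled in the paper by the stronger Lemma~\ref{lemma-on-local-systems} (every local system on $\fM^0_\Lambda$ is trivial), proved via the Global Torelli Theorem rather than by analyzing $\pi_1$ directly.
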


The Theorem is proved in section \ref{sec-triviality-of-a-gerbe}.
The Teichm\"{u}ller space of an irreducible holomorphic symplectic manifold $X$ maps to the moduli space of marked pairs \cite[Cor. 4.31]{verbitsky}, and so the universal family over the latter pulls back to one over the Teichm\"{u}ller space.

Consider the moduli space $\fM_{\Lambda,G}$ of isomorphism classes of triples $(X,\eta,\psi)$, where $(X,\eta)$ is a marked
irreducible holomorphic symplectic manifold, and $\psi:\Aut_0(X)\rightarrow G$ is an isomorphism with a fixed finite group $G$.
The automorphism group $\Aut(X,\eta)$ of the marked pair $(X,\eta)$ is $\Aut_0(X)$, and the automorphism group 
$\Aut(X,\eta,\psi)$ of the triple is the center of $\Aut_0(X)$. 

\begin{rem}
\label{rem-fine-moduli-space} 
If the center of $G$ is trivial, then 
the moduli space $\fM_{\Lambda,G}$ represents the functor 
from the category of analytic spaces to the category of sets, which associates to an analytic space $B$ the set of isomorphism classes of triples $(\pi,\eta,\psi)$, consisting of a
family $\pi \nolinebreak :  \nolinebreak \X\rightarrow \nolinebreak B$ 
of irreducible holomorphic symplectic manifolds,  an isometric trivialization 
$\eta\nolinebreak : \nolinebreak R^2\pi_*\Integers\nolinebreak \rightarrow \nolinebreak \underline{\Lambda}_B$, and a trivialization
$\psi:\SheafAut_0(\pi)\rightarrow \underline{G}_B$. The local universal families glue in this case uniquely to a global universal family.
\end{rem}

Let us sketch the proof of Theorem \ref{thm-main}.
We have the forgetful morphism $\phi:\fM_{\Lambda,G}\rightarrow \fM_\Lambda$. 
We first show that $\phi$ restricts to each connected component $\fM^0_{\Lambda,G}$ of $\fM_{\Lambda,G}$ as an isomorphism onto the corresponding connected component $\fM^0_\Lambda$ of $\fM_\Lambda$ (Lemma \ref{lemma-M-Lambda-G}). 

\hide{
We have a global family $\Y\rightarrow \fM^0_{\Lambda,G}$, whose fiber over the isomorphism class of 
$(X,\eta,\psi)$ is the the quotient of $X$ by the center of $\Aut_0(X)$. 
Hence, such a family exists over $\fM^0_\Lambda$ as well.
The existence of a universal family of irreducible holomorphic symplectic manifolds over $\fM^0_\Lambda$ is 
thus analogous to the following lifting problem.
Consider a short exact sequence of groups
\[
0\rightarrow A \rightarrow G \rightarrow Q\rightarrow 0,
\]
where $G$ is a reductive group and $A$ is an abelian subgroup. 
Let $P$ be a principal $Q$-bundle over a topological space $B$. 
Denote by $\underline{A}_B$ the sheaf of continuous sections of the trivial group bundle with fiber $A$. 
Then the obstruction to lifting $P$ to a principal $G$-bundle over $B$ corresponds to a class $[P]$
in the \v{C}ech cohomology group $\check{H}^2(B,\underline{A}_B)$. 
Lifts exist locally, and the collection of all local lifts corresponds to the categorical construct of a {\em gerbe with band}
$\underline{A}_B$. The obstruction class $[P]$ in $\check{H}^2(B,\underline{A}_B)$ is the equivalence class of this gerbe \cite[Prop. 5.2.3 and Theorem 5.2.9]{giraud,brylinski}. If the groups $A$, $G$, and $Q$ are complex and $B$ is an analytic space, the analogous statement holds for holomorphic principal bundles and the sheaf $\underline{A}_B$ of holomorphic sections.
}

Fix a connected component $\fM^0_{\Lambda,G}$ of $\fM_{\Lambda,G}$.
Let $\underline{\Lambda}$ be the trivial local system over $\fM^0_{\Lambda,G}$ with fiber $\Lambda$
and define $\underline{G}$ similarly.
Consider the stack $\GG$ over $\fM^0_{\Lambda,G}$, which associates to 
each subset $U$ of $\fM^0_{\Lambda,G}$, open in the classical topology, the following category $\GG(U)$.
Objects of $\GG(U)$ are triples $(\pi,\eta,\psi)$, consisting of a family
$\pi:\X\rightarrow U$ of irreducible holomorphic symplectic manifolds, an isometric isomorphism of local systems
$\eta:R^2\pi_*\Integers \rightarrow \underline{\Lambda}_{U}$, and an isomorphism of local systems
$\psi:\SheafAut_0(\pi)\rightarrow \underline{G}_{U}$, such that 
the triple $(X_t,\eta_t,\psi_t)$ over a point $t$ of $U$ represents the isomorphism class parametrized by $t$ as a point of the coarse moduli space $\fM^0_{\Lambda,G}$.
The morphisms of $\GG(U)$ are isomorphisms of families, compatible with the trivializations of the two local systems. 

We observe next that $\GG$ is a gerbe over $\fM^0_{\Lambda,G}$ with band the trivial sheaf of groups $\underline{Z}$ with fiber the center $Z$ of $G$ (Lemma \ref{lemma-GG-is-a-gerbe}).
See section \ref{sec-gerbes} for the definitions. 
Equivalence classes of gerbes with band $\underline{Z}$, satisfying a technical property shared by $\GG$, are parametrized by $\check{H}^2(\fM^0_{\Lambda,G},Z)$
\cite{giraud}, \cite[Theorem 5.2.8]{brylinski}, \cite[Theorem 3.1]{moerdijk}, 
and Lemma \ref{lemma-GG-is-represented-by-a-cocycle} below. 
The existence of a universal family over $\fM^0_{\Lambda,G}$
is equivalent to the triviality of the gerbe, by \cite[III.2.1.1.2]{giraud}, and is thus also equivalent to the vanishing of the class 
$[\GG]\in \check{H}^2(\fM^0_{\Lambda,G},Z)$. 
Denote by $\phi_*[\GG]$ the image of $[\GG]$ in $\check{H}^2(\fM^0_\Lambda,Z)$ via the isomorphism $\phi:\fM_{\Lambda,G}^0\rightarrow \fM^0_\Lambda$.

Associated to a K\"{a}hler class $\omega$ on $X$ and a marking $\eta$ 
is a {\em twistor line} $Tw_{\omega,\eta}$ in $\fM^0_\Lambda$ through the marked pair $(X,\eta)$
\cite[1.13]{huybrechts-basic-results}. 
$Tw_{\omega,\eta}$ is isomorphic to a projective line $\PP^1$.
Finally, we show that the restriction homomorphism $\check{H}^2(\fM^0_\Lambda,Z)\rightarrow \check{H}^2(Tw_{\omega,\eta},Z)$
is an isomorphism (Lemma \ref{lemma-pullback-by-kappa-is-an-isomorphism}).
The image of $\phi_*[\GG]$ in 
$\check{H}^2(Tw_{\omega,\eta},Z)$ vanishes, by the existence of the twistor family over $Tw_{\omega,\eta}$. 
Hence, the class $[\GG]$ is trivial and 
Theorem \ref{thm-main} follows.

\smallskip

\hide{
\begin{rem}
\label{rem-non-fine-moduli-space}
Let $\pi:\X\rightarrow B$ be a family of irreducible holomorphic symplectic manifolds admitting trivializations $\eta:R^2\pi_*\Integers\rightarrow \underline{\Lambda}_B$ and $\psi:\SheafAut_0(\pi)\rightarrow \underline{G}_B$. 
Denote the associated classifying morphism by $\kappa_{(\pi,\eta,\psi)}:B\rightarrow \fM_{\Lambda,G}$. Then the set of isomorphism classes 
of triples $(\tilde{\pi},\tilde{\eta},\tilde{\psi})$, consisting of a family $\tilde{\pi}:\tilde{\X}\rightarrow B$ of irreducible holomorphic symplectic manifolds admitting trivializations $\tilde{\eta}:R^2\tilde{\pi}_*\Integers\rightarrow \underline{\Lambda}_B$ and 
$\tilde{\psi}:\SheafAut_0(\tilde{\pi})\rightarrow \underline{G}_B$ such that
$\kappa_{(\pi,\eta,\psi)}=\kappa_{(\tilde{\pi},\tilde{\eta},\tilde{\psi})}$, is an $\check{H}^1(B,Z)$-torsor, by 
Proposition \ref{lemma-GG-is-a-gerbe} below and 
\cite[Prop. 5.2.5]{brylinski}. 
In particular, if the center $Z$ of $G$ is not trivial, then the moduli space 
$\fM_{\Lambda,G}$ does {\em not} represent the functor $\F_{\Lambda,G}$ in Remark \ref{rem-fine-moduli-space}. 
Indeed, let $B$ be an analytic space with non-trivial $\check{H}^1(B,Z)$ and let $\kappa:B\rightarrow \fM_{\Lambda,G}$
be a constant morphism with image the isomorphism class of $(X_0,\eta_0,\psi_0)$. 
We get an $\check{H}^1(B,Z)$-torsor of isomorphism classes of triples $(\tilde{\pi},\tilde{\eta},\tilde{\psi})$, with 
$\tilde{\pi}:\tilde{\X}\rightarrow B$ isotrivial with fiber $X_0$, 
one of which is the trivial family with trivializations extending $\eta_0$ and $\psi_0$.
\end{rem}
}

Irreducible holomorphic symplectic manifolds of {\em $K3^{[n]}$-type} are those, which are deformation equivalent to the Hilbert scheme of length $n$ subschemes of a $K3$ surface. If $X$ is of $K3^{[n]}$-type, then any automorphism of $X$, which acts trivially on $H^2(X,\Integers)$, is the identity.
This follows for Hilbert schemes by a result of Beauville \cite{beauville},
and consequently also for their deformations, see
\cite[Sec. 2]{hassett-tschinkel-lagrangian-planes}. The automorphism group of every marked pair $(X,\eta)$, with $X$ of $K3^{[n]}$-type, is thus trivial, and Theorem \ref{thm-main} is known in this case. 

\begin{example}
Fix an integer $n\geq 2$.
Let $T$ be a two-dimensional compact complex torus, $T^{[n+1]}$ its Douady space of length $n+1$ subschemes,  
$T^{(n+1)}$ its $(n+1)$ symmetric product, 
and 
consider the fiber $K^{[n]}(T)$ over $0\in T$ of the composition
$T^{[n+1]} \rightarrow T^{(n+1)}\rightarrow T,$
where the left arrow is the Hilbert Chow morphism, and the right is summation. The fiber 
$K^{[n]}(T)$ is a $2n$-dimensional irreducible holomorphic symplectic manifold known as the
{\em generalized Kummer manifold} associated to $T$ \cite{beauville-varieties-with-zero-c-1}. 
Translation by points of $T$ of order $n+1$ induce automorphisms 
of $T^{[n+1]}$ leaving $K^{[n]}(T)$ invariant, as does multiplication by $-1$. These automorphisms generate
$\Aut_0(K^{[n]}(T))$, by \cite[Cor. 5]{BSN}. 
When $X$ is deformation equivalent to $K^{[n]}(T)$,
$\Aut_0(X)$ 
is thus isomorphic to the semidirect product $[\Integers/(n+1)\Integers]^{4}\rtimes \Integers/2\Integers$, where 
the non-trivial element of $\Integers/2\Integers$ acts on $[\Integers/(n+1)\Integers]^{4}$ via multiplication by $-1$
(see \cite[Theorem 3 and Corollary 5]{BSN}). 
The center of $\Aut_0(X)$ is trivial, if $n$ is even, and it is isomorphic to $[\Integers/2\Integers]^4$, if $n$ is odd.
\end{example}

\hide{
\begin{example}
Let $\Sigma$ be the subgroup of $T$ of points of order $n+1$. $\Sigma$ acts on 
$T$ by translations as well as on $K^{[n]}(T)$, as observed above. The quotient of $T\times K^{[n]}(T)$ by the diagonal action of $\Sigma$ is isomorphic to  $T^{[n+1]}$. Let $\pi:T^{[n+1]}\rightarrow T$ be the isotrivial family displayed in (\ref{eq-isotrivial-family}). 
$\Sigma$ is a subgroup of $G:=\Aut_0(K^{[n]}(T))$ and so the action of $\Sigma$ on $T$ lifts to an action on 
the trivial local system $\underline{G}_T$ over $T$, where the action on the fibers is via conjugation.
The local system $\SheafAut_0(\pi)$ over $T$ is the quotient of $\underline{G}_T$ by this action and is thus non-trivial. 
The local system $R^2\pi_*\Integers$ is trivial, and so a choice of an isometry $\eta_0:H^2(K^{[n]}(T),\Integers)\rightarrow \Lambda$ with
a fixed lattice $\Lambda$ yields a trivialization $\eta$ of $R^2\pi_*\Integers$ and the classifying morphism
$\kappa_{\pi,\eta}:T\rightarrow \fM_\Lambda$ is a constant morphism. 
Hence, the pullback via $\kappa_{\pi,\eta}$ of the universal family over $\fM_\Lambda$
is a trivial family.
While the moduli space $\fM_{\Lambda,G}$ represents the functor $\F_{\Lambda,G}$ for even $n$, 
by Remark \ref{rem-fine-moduli-space}, the moduli space $\fM_\Lambda$ only coarsely represents the functor $\F_\Lambda$. 
For odd $n\geq 3$ the moduli space $\fM_{\Lambda,G}$ only coarsely represents $\F_{\Lambda,G}$, 
by Remark \ref{rem-non-fine-moduli-space}. 
\end{example}
}

The group $\Aut_0(X)$ acts faithfully on the total cohomology ring 
$H^*(X,\Integers)$, when $X$ is deformation equivalent to a generalized Kummer manifold, by a result of Oguiso \cite[Theorem 1.3]{oguiso}. 
An alternative proof of Theorem \ref{thm-main} for this deformation type follows from Oguiso's result 
via the argument used in the proof of Lemma \ref{lemma-M-Lambda-G} below. 

The classification of irreducible holomorphic symplectic manifolds is an open problem. Two additional deformation types are known at present, one of  six dimensional manifolds \cite{ogrady-6} and one of ten dimensional manifolds
\cite{ogrady-10}. 

This work was motivated by the talk of Zhiyuan Li at the workshop
``Hyper-K\"{a}hler Manifolds, Hodge Theory, and Chow Groups''
at the  Tsinghua Sanya International Mathematics Forum in December 2016. In his talk Li surveyed 
consequences of the existence of universal families over Teichm\"{u}ller spaces to the study of 
cycles on moduli spaces of polarized irreducible holomorphic symplectic manifolds, generalizing previous work in the $K3$ surface case
\cite{BZMM}.

%
\section{Moduli spaces of marked pairs and triples}
\label{sec-moduli-spaces}

Fix a lattice $\Lambda$ isometric to the Beauville-Bogomolov-Fujiki lattice of some irreducible holomorphic symplectic manifold.
Set $\Lambda_\ComplexNumbers:=\Lambda\otimes_\Integers\ComplexNumbers$.
Let $\Omega_\Lambda$ be the period domain of irreducible holomorphic symplectic manifolds with 
Beauville-Bogomolov-Fujiki lattice $\Lambda$
\begin{equation}
\label{eq-period-domain}
\Omega_\Lambda:=\{\ell\in \PP(\Lambda_\ComplexNumbers) \ : \ (\ell,\ell)=0, \ \mbox{and} \ (\ell,\bar{\ell})>0\}.
\end{equation}
Fix a connected component $\fM_\Lambda^0$ of the moduli space of marked irreducible holomorphic symplectic manifolds.
The {\em period map} $P:\fM^0_\Lambda\rightarrow \Omega_\Lambda$ sends the isomorphism class of a marked pair $(X,\eta)$ to
$\eta(H^{2,0}(X))$.  Given a point 
$\ell\in\Omega_\Lambda$, denote by $\Lambda^{1,1}(\ell)$ the sublattice of $\Lambda$ orthogonal to $\ell$. 
The period map is a surjective local homeomorphism \cite{huybrechts-basic-results}. $P$ is generically injective and 
any two points in the same fiber of $P$ are inseparable, by Verbitsky's Global Torelli Theorem 
\cite{verbitsky,huybrechts-torelli}. 
A point $(X,\eta)$ in $\fM_\Lambda^0$ is a separated point, if and only if the K\"{a}hler cone of $X$ is equal to its {\em positive cone}, where the latter is the connected component of $\{\alpha\in H^{1,1}(X,\RealNumbers) \ : \ (\alpha,\alpha)>0\}$ containing the K\"{a}hler cone \cite{verbitsky,huybrechts-torelli} (see also \cite[Theorem 2.2 (4)]{markman-survey}). Consequently, 
if $\Lambda^{1,1}(\ell)$ is trivial, or cyclic spanned by a class $\lambda$ with $(\lambda,\lambda)\geq 0$,  
then the fiber $P^{-1}(\ell)$ consists of  a single separated point of  $\fM_\Lambda^0$
\cite[Corollaries 5.7 and 7.2]{huybrechts-basic-results}
(see also \cite[Theorem 2.2 (5)]{markman-survey}). 

\begin{lem}
\label{lemma-on-local-systems}
Every local system over $\fM_\Lambda^0$ is trivial.
\end{lem}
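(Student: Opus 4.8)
The plan is to show that $\fM_\Lambda^0$ is simply connected, which immediately forces every local system over it to be trivial. Recall that a local system over a connected, locally path-connected space corresponds to a representation of its fundamental group; if the fundamental group is trivial, so is every such representation.

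\medskip

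\noindent\textbf{Step 1: Reduce to simple connectivity of the period domain.} The period map $P\colon\fM_\Lambda^0\to\Omega_\Lambda$ is a surjective local homeomorphism, and by Verbitsky's Global Torelli Theorem it is generically injective with inseparable points in each fiber. The first thing I would do is upgrade this to the statement that $P$ induces an isomorphism on fundamental groups. The subtlety is that $P$ is not a covering map (the fibers can be positive-dimensional ``inseparable'' sets, and $\fM_\Lambda^0$ is non-Hausdorff). The standard way around this, following Huybrechts, is to pass to the \emph{Hausdorff reduction} $\widehat{\fM}_\Lambda^0$ obtained by identifying inseparable points; the induced map $\widehat{P}\colon\widehat{\fM}_\Lambda^0\to\Omega_\Lambda$ is a bijective local homeomorphism, hence a homeomorphism, and the quotient map $\fM_\Lambda^0\to\widehat{\fM}_\Lambda^0$ collapses only the (connected, indeed contractible-enough) fibers so that it is a $\pi_1$-isomorphism. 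I would cite Huybrechts' basic results here for the fact that the fibers over non-separated points are connected. An alternative, cleaner route: the locus of separated points in $\fM_\Lambda^0$ has complement of real codimension at least two (it is governed by the countable union of positive-index hyperplane sections $\{\ell : \lambda\in\Lambda^{1,1}(\ell)\}$ with $(\lambda,\lambda)<0$), so removing it does not change $\pi_1$, and on the separated locus $P$ is an open embedding onto the complement of the same countably many hyperplane sections in $\Omega_\Lambda$.

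\medskip

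\noindent\textbf{Step 2: Simple connectivity of $\Omega_\Lambda$ and its relevant open subsets.} The period domain $\Omega_\Lambda$ is an open subset of a smooth quadric in $\PP(\Lambda_\ComplexNumbers)$, and is in fact a homogeneous space of the form $\mathrm{SO}(2,b_2-2)/(\mathrm{SO}(2)\times\mathrm{SO}(b_2-2))$-type domain (more precisely it is the associated period domain for weight-two Hodge structures, a connected open complex submanifold of the quadric). It is well known to be simply connected --- indeed it is diffeomorphic to a product of a sphere with a Euclidean space, or one invokes that Hermitian symmetric domains of this type are contractible and $\Omega_\Lambda$ is an $\mathbb{R}^{b_2-2}$-bundle over such. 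Since $b_2\geq 4$ for all known deformation types, real codimension considerations show removing a countable union of the complex hypersurfaces $\{\ell : (\ell,\lambda)=0\}$ (each of real codimension $2$) from a simply connected complex manifold of dimension $\geq 2$ leaves the fundamental group trivial. I would combine Steps 1 and 2 to conclude $\pi_1(\fM_\Lambda^0)=1$.

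\medskip

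\noindent\textbf{Expected main obstacle.} The genuinely delicate point is Step 1: controlling the effect of the non-Hausdorff identifications and the non-properness of $P$ on $\pi_1$. One must argue carefully that passing between $\fM_\Lambda^0$, its separated locus, and $\Omega_\Lambda$ minus a thin set does not alter the fundamental group; the cleanest packaging uses the codimension-$\geq 2$ description of the non-separated locus together with the fact (from Verbitsky--Huybrechts) that inseparable fibers are connected. The rest --- simple connectivity of $\Omega_\Lambda$, and the passage from ``$\pi_1$ trivial'' to ``all local systems trivial'' --- is standard.
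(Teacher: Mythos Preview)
Your approach differs from the paper's: the paper does \emph{not} attempt to show that $\fM_\Lambda^0$ is simply connected. Instead it argues directly that any local system $\LB$ on $\fM_\Lambda^0$ descends along the period map to a local system on $\Omega_\Lambda$. The mechanism is a bijection between the connected components of $U_{\vec{i}}:=\bigcap_k U_{i_k}$ and those of $V_{\vec{i}}:=\bigcap_k P(U_{i_k})$, for any finite intersection of small simply connected charts on which $P$ is a homeomorphism; this bijection follows from Global Torelli together with a connectedness lemma of Verbitsky. Since the gluing cocycle for $\LB$ is constant on connected components, it transfers verbatim to a cocycle on $\Omega_\Lambda$, where the resulting local system is trivial because $\Omega_\Lambda$ is simply connected.

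Your argument, as written, contains a genuine error. You twice invoke the principle that removing a subset of real codimension at least two leaves $\pi_1$ unchanged---once to pass from $\fM_\Lambda^0$ to its separated locus, and once to pass from $\Omega_\Lambda$ to the complement of the hyperplane sections $\lambda^\perp$. This principle is \emph{false} in real codimension exactly $2$: removing a complex hypersurface from a simply connected complex manifold typically creates new loops (the meridians), for instance $\pi_1(\ComplexNumbers^2\setminus\{z_1=0\})\cong\Integers$. What holds is only that the inclusion of the complement induces a \emph{surjection} on $\pi_1$. Thus $\Omega_\Lambda$ minus countably many hyperplanes is far from simply connected, and identifying the separated locus with (an open subset of) this complement does not yield $\pi_1(\fM_\Lambda^0)=1$. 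Your Hausdorff-reduction route is also not justified as stated: the fibres of the reduction map are finite sets of pairwise inseparable points, and you supply no fibration or Vietoris-type argument to conclude a $\pi_1$-isomorphism in this non-Hausdorff setting. The paper's \v{C}ech-level argument sidesteps both difficulties, since it never computes $\pi_1$ and needs only the component bijection for intersections---and \emph{that} genuinely does follow from the codimension description of where $P$ fails to be injective.
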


\begin{proof}
The period domain $\Omega_\Lambda$ is simply-connected, by 
\cite[Cor. 3]{huybrechts-period-domains}. Hence, it suffices to prove that every local system $\LB$  over $\fM_\Lambda^0$ 
is the pullback of a local system over $\Omega_\Lambda$ via the period map $P$. 
Choose a covering $\U:=\{U_i \ : \ i\in I\}$ of $\fM_\Lambda^0$ by simply connected open subsets $U_i$, such that 
$P$ restricts to each $U_i$ as a homeomorphism. Set $V_i:=P(U_i)$. 
We get the open covering $\V:=\{V_i \ : \ i\in I\}$ of $\Omega_\Lambda$, by the surjectivity of the period map.
Given an $n$-tuple $\vec{i}:=(i_1, i_2, \dots, i_n)\in I^n$, denote by $J_{\vec{i}}$ the set of connected components of 
$U_{\vec{i}}:=\cap_{k=1}^n U_{i_k}.$ Set $V_{\vec{i}}:=\cap_{k=1}^n V_{i_k}.$ 
The period map restricts to an open embedding of $U_{\vec{i}}$ into $V_{\vec{i}}$.
The complement of $P(U_{\vec{i}})$ in $V_{\vec{i}}$ is contained in the intersection of $V_{\vec{i}}$ with the countable union
of closed complex analytic hyperplanes $\Omega_\Lambda\cap \lambda^\perp$, as $\lambda$ varies in the set of 
primitive classes in $\Lambda$ with $(\lambda,\lambda)<0$, by the Global Torelli Theorem. Consequently, 
each connected component of $V_{\vec{i}}$ contains the image of a unique connected component of 
$U_{\vec{i}}$, by \cite[Lemma 4.10]{verbitsky}. We get a one-to-one correspondence between the set of connected components of $U_{\vec{i}}$ and $V_{\vec{i}}$. 

The restriction $\restricted{\LB}{U_i}$ of $\LB$ to $U_i$ is a trivial local system, for all $i\in I$, as $U_i$ is simply connected. 
Set $\Gamma(U_i,\LB):=H^0(U_i,\restricted{\LB}{U_i})$.
The evaluation homomorphism
$ev_t^i:\Gamma(U_i,\LB)\rightarrow \LB_t$ is an isomorphism, for every fiber $\LB_t$ of $\LB$ over a point $t$ of $U_i$. 
Given $\vec{i}\in I^2$ and $c\in J_{\vec{i}}$, denote by $U_c$ the corresponding connected component of $U_{\vec{i}}$. 
Given  points $t_i$ and $t'_i$ in $U_i$, parallel transport along any path from $t_i$ to $t'_i$ in $U_i$ is given by 
$ev^i_{t'_i}(ev^i_{t_i})^{-1}$.
The restriction homomorphism
$
\rho^{i_k}_c:\Gamma(U_{i_k},\LB)\rightarrow \Gamma(U_c,\LB)
$
is an isomorphism, for $k=1,2,$ and for every $c\in J_{\vec{i}}$.
Hence, the gluing of $\restricted{\LB}{U_{i_1}}$
and $\restricted{\LB}{U_{i_2}}$ along $U_c$ is determined by the isomorphism 
\[
f_{c,i_1,i_2}:=(\rho^{i_1}_c)^{-1}\circ \rho^{i_2}_c : \Gamma(U_{i_2},\LB)\rightarrow \Gamma(U_{i_1},\LB)
\]
in the sense that the following diagram is commutative  for a point $t_c$ in $U_c$.
\[
\xymatrix{
\Gamma(U_{i_2},\LB) \ar[r]^{\rho^{i_2}_c} 
\ar[dr]_{ev^{i_2}_{t_c}} \ar@/^3pc/[rr]_{f_{c,i_1,i_2}}&
\Gamma(U_c,\LB) \ar[d]^{ev^c_{t_c}} &
\Gamma(U_{i_1},\LB) \ar[dl]^{ev^{i_1}_{t_c}}
\ar[l]_{\rho^{i_1}_c}
\\
&\LB_{t_c}&
}
\]
Given a point $t\in U_{i_1}\cap U_{i_2}$, set $g_{i_1,i_2}(t):=f_{c,i_1,i_2}:\Gamma(U_{i_2},\LB)\rightarrow \Gamma(U_{i_1},\LB)$,
where $U_c$ is the connected component containing $t$. 
The transformation $g_{i_1,i_2}$ glues  the trivializations 
$\restricted{\LB}{U_{i_1}}\nolinebreak \cong\nolinebreak \underline{\Gamma(U_{i_1},\LB)}_{U_{i_1}}$
and $\restricted{\LB}{U_{i_2}}\nolinebreak \cong \nolinebreak \underline{\Gamma(U_{i_2},\LB)}_{U_{i_2}}$ along $U_{i_1,i_2}.$

Let $P_i:U_i\rightarrow V_i$ be the restriction of $P$ to $U_i$.
Denote by $P_i^*:\Gamma(V_i,P_{i,*}\restricted{\LB}{U_i})\rightarrow \Gamma(U_i,\restricted{\LB}{U_i})$ the natural isomorphism and let $P_{i,*}$ be its inverse. We get the isomorphisms
\[
\bar{f}_{c,i_1,i_2}:=P_{i_1,*}\circ
f_{c,i_1,i_2}
\circ P_{i_2}^*:
\Gamma(V_{i_2},P_{i_2,*}\restricted{\LB}{U_{i_2}})\rightarrow \Gamma(V_{i_1},P_{i_1,*}\restricted{\LB}{U_{i_1}}).
\]
The latter determines a gluing 
of $P_{i_1,*}\restricted{\LB}{U_{i_1}}$ and $P_{i_2,*}\restricted{\LB}{U_{i_2}}$
along the connected component $V_c$ of $V_{i_1,i_2}$, for every $c\in J_{\vec{i}}$, hence along all the connected components of $V_{i_1,i_2}$. Denote by $\bar{g}_{i_1,i_2}$
the gluing of the trivializations 
$P_{i_k,*}\restricted{\LB}{U_{i_k}}\cong \underline{\Gamma(V_{i_k},P_{i_k,*}\restricted{\LB}{U_{i_k}})}_{V_{i_k}}$, $k=1,2$, 
along $V_{i_1,i_2}$. These gluing transformations satisfy the co-cycle condition
$\bar{g}_{i,k}=\bar{g}_{i,j}\bar{g}_{j,k}$, since each $\bar{g}_{i,j}$ pulls back to $g_{i,j}$ and the $g_{i,j}$'s satisfy the co-cycle condition. 
Let $\overline{\LB}$ be the local system over $\Omega_\Lambda$ determined by the covering $\V$ and the gluing transformations $\bar{g}_{i_1,i_2}$. 
Then the restriction of $P^*\overline{\LB}$ to $U_i$ is naturally identified with that of $\LB$ and each gluing transformation 
$P^*\bar{g}_{i_1,i_2}$ restricts to $g_{i_1,i_2}$. Hence, $\LB$ is isomorphic to $P^*\overline{\LB}$.
\end{proof}

We keep the notation of the introduction. 
Fix a group $G$ isomorphic to the group $\Aut_0(X)$ of some irreducible holomorphic symplectic manifold $X$. 
The moduli space $\fM_{\Lambda,G}$ is constructed by gluing all Kuranishi families
$\pi:\X\rightarrow U$, each endowed with a choice of an isometric trivialization $\eta:R^2\pi_*\Integers\rightarrow \underline{\Lambda}_U$
and a trivialization $\psi:\Aut_0(\pi)\rightarrow \underline{G}_U$. The construction is completely analogous to that of 
$\fM_\Lambda$ in \cite[Prop. 4.3]{huybrechts-torelli}. The following is an immediate corollary of Lemma \ref{lemma-on-local-systems}.


\begin{lem}
\label{lemma-M-Lambda-G}
The forgetful morphism $\phi:\fM_{\Lambda,G}\rightarrow \fM_\Lambda$ restricts to each connected component 
$\fM^0_{\Lambda,G}$ of $\fM_{\Lambda,G}$ as an isomorphism onto the corresponding connected component 
$\fM^0_\Lambda$ of $\fM_\Lambda$. The set of connected components of $\fM_{\Lambda,G}$ over 
$\fM^0_\Lambda$ forms a torsor under the group of outer automorphisms of $G$.
\end{lem}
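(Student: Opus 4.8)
The plan is to analyze the fibers of $\phi$ over a marked pair $(X,\eta)$ and show they are acted on simply transitively by $\Out(G)$, while $\phi$ itself is a local homeomorphism, hence a covering map onto $\fM^0_\Lambda$; triviality of the covering will then follow from Lemma \ref{lemma-on-local-systems}. First I would identify the fiber $\phi^{-1}(X,\eta)$ set-theoretically: a point lying over $(X,\eta)$ is an isomorphism class of triples $(X,\eta,\psi)$ with $\psi:\Aut_0(X)\to G$ an isomorphism, where two such triples $(X,\eta,\psi)$ and $(X,\eta,\psi')$ are identified precisely when there is an automorphism $f$ of $(X,\eta)$, i.e. $f\in\Aut_0(X)$, intertwining $\psi$ and $\psi'$. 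Since $f\in\Aut_0(X)=\Aut(X,\eta)$ acts on $\Aut_0(X)$ by conjugation, $\psi'=\psi\circ c_f^{-1}$ for the corresponding inner automorphism $c_f$; hence the fiber is $\Isom(\Aut_0(X),G)/\mathrm{Inn}(\Aut_0(X))$, which is a torsor under $\Out(G)$ acting by post-composition (this is nonempty since $G\cong\Aut_0(X)$ by hypothesis, and simple transitivity is the standard fact that $\Isom(A,G)$ is a bitorsor).

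Next I would promote this to a statement about families. Using the construction of $\fM_{\Lambda,G}$ by gluing Kuranishi families $\pi:\X\to U$ equipped with trivializations $\eta:R^2\pi_*\Integers\to\underline{\Lambda}_U$ and $\psi:\SheafAut_0(\pi)\to\underline{G}_U$ — recalled just before the statement — the map $\phi$ forgets $\psi$. Given a Kuranishi chart $(\pi:\X\to U,\eta)$ for $\fM_\Lambda$ with $U$ connected and simply connected, the local system $\SheafAut_0(\pi)$ is trivial over $U$ (it is a local system by \cite[Theorem 2.1]{hassett-tschinkel-lagrangian-planes}, and $U$ is simply connected), so trivializations $\psi:\SheafAut_0(\pi)\to\underline{G}_U$ exist; any two differ by a locally constant, hence constant, automorphism of $\underline{G}_U$, i.e. by an element of $\Aut(G)$, and two such $\psi$'s give isomorphic triples over $U$ iff they differ by conjugation by a section of $\SheafAut_0(\pi)\cong\underline{G}_U$, i.e. by $\mathrm{Inn}(G)$. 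Therefore $\phi^{-1}(U)$ is a disjoint union of copies of $U$ indexed by $\Out(G)$, each mapped homeomorphically to $U$ by $\phi$; so $\phi:\fM_{\Lambda,G}\to\fM_\Lambda$ is a covering map with fiber an $\Out(G)$-torsor, and it is $\Out(G)$-equivariant for the natural post-composition action on $\psi$.

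Finally I would conclude: a covering space of $\fM^0_\Lambda$ is the same as a local system of sets on $\fM^0_\Lambda$ (its monodromy representation), and by Lemma \ref{lemma-on-local-systems} every local system over $\fM^0_\Lambda$ is trivial; hence the restriction of $\phi$ to $\phi^{-1}(\fM^0_\Lambda)$ is a trivial covering, i.e. a disjoint union of copies of $\fM^0_\Lambda$ each mapped isomorphically by $\phi$. These copies are the connected components of $\fM_{\Lambda,G}$ lying over $\fM^0_\Lambda$; $\phi$ restricts to each as an isomorphism onto $\fM^0_\Lambda$, and the set of them, being the fiber of the trivial $\Out(G)$-torsor covering, is an $\Out(G)$-torsor. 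I expect the main obstacle to be the second step — carefully checking that the local description of $\phi^{-1}(U)$ over a Kuranishi chart really does assemble, compatibly with the gluing data defining $\fM_{\Lambda,G}$, into an $\Out(G)$-torsor covering rather than something twisted; but since the gluing cocycle for $\fM_{\Lambda,G}$ is built from that of $\fM_\Lambda$ together with transition isomorphisms of the $\underline{G}$-trivializations, and $\Out(G)$ acts on everything by post-composition commuting with those transitions, this compatibility is essentially formal.
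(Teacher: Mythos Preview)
Your proposal is correct and follows essentially the same strategy as the paper: both identify the fiber of $\phi$ over $(X,\eta)$ with the $\Out(G)$-torsor $\Isom(\Aut_0(X),G)/\mathrm{Inn}$, recognize $\fM_{\Lambda,G}\to\fM_\Lambda$ as the total space of the resulting local system (equivalently, a covering), and then invoke Lemma~\ref{lemma-on-local-systems} to conclude triviality over each component. The paper packages the middle step more tersely by declaring $\fM_{\Lambda,G}$ to be the total space of a local system $\SheafOut_G$ and reading off components as global sections, whereas you spell out the Kuranishi-chart verification that $\phi$ is an $\Out(G)$-equivariant covering; these are the same argument in different levels of detail.
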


\begin{proof}
%
Let $Ad_g\in \Aut(G)$ be conjugation by $g\in G$.
$G$ acts on the set $\Isom(\Aut_0(X),G)$, of group isomorphisms from $\Aut_0(X)$ onto $G$, by 
the action $\psi \mapsto Ad_g\circ \psi$.
Similarly, $f\in\Aut_0(X)$ acts on 
$\Isom(\Aut_0(X),G)$ via $\psi\mapsto \psi\circ Ad_{f^{-1}}$. The set of orbits for the two actions coincide and we get a natural identification 
$\Isom(\Aut_0(X),G)/G=\Isom(\Aut_0(X),G)/\Aut_0(X),$ so we denote both orbit spaces by $\Out(\Aut_0(X),G)$.
Over $\fM_\Lambda$ we have the local system $\SheafOut_G$, whose fiber over the point corresponding to the isomorphism class
$(X,\eta)$ is $\Out(\Aut_0(X),G)$.
If $f$ is an automorphism of a marked pair $(X,\eta)$,
then $f$ belongs to $\Aut_0(X)$. Hence, the local system  $\SheafOut_G$ is well defined.

The moduli space $\fM_{\Lambda,G}$ is simply the total space of the local system $\SheafOut_G$.
The local system $\SheafOut_G$ restricts to a trivial local system over each connected component $\fM_\Lambda^0$, by Lemma
\ref{lemma-on-local-systems}. 
Hence, connected components of $\fM_{\Lambda,G}$ are simply global sections of $\SheafOut_G$. 
\end{proof}

Given an abelian group $A$ and a topological space $S$, denote by 
$\check{H}^i(S,A)$ the $i$-th \v{C}ech cohomology of $S$ with coefficients in $A$. 

\begin{lem}
\label{lemma-pullback-by-the-period-map-is-an-isomorphism}
The pull back homomorphism 
$P^*:\check{H}^i(\Omega_\Lambda,A)\rightarrow \check{H}^i(\fM^0_\Lambda,A)$ is an isomorphism, for every abelian group $A$.
\end{lem}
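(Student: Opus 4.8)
The plan is to imitate the proof of Lemma \ref{lemma-on-local-systems} as closely as possible, replacing the statement ``every local system is a pullback'' by the \v{C}ech-cohomological assertion that $P^*$ is an isomorphism on all cohomology. The key inputs are: (i) the period map $P:\fM^0_\Lambda\rightarrow\Omega_\Lambda$ is a surjective local homeomorphism; (ii) by the Global Torelli Theorem, the ``branching locus'' of $P$ over $\Omega_\Lambda$ is contained in a countable union of proper closed complex analytic hypersurfaces $\Omega_\Lambda\cap\lambda^\perp$, $(\lambda,\lambda)<0$; and (iii) by \cite[Lemma 4.10]{verbitsky} the complement of such a countable union is connected, and remains connected in intersection with any of the open sets one uses. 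These were exactly the facts exploited in Lemma \ref{lemma-on-local-systems}.

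Concretely, I would choose an open cover $\U=\{U_i\}$ of $\fM^0_\Lambda$ by simply connected (indeed, say, contractible) open sets on which $P$ restricts to a homeomorphism onto its image $V_i:=P(U_i)$, with the $V_i$ also contractible, and such that $\V=\{V_i\}$ is an open cover of $\Omega_\Lambda$ (using surjectivity of $P$). \v{C}ech cohomology of $\Omega_\Lambda$ with respect to $\V$ computes $\check{H}^*(\Omega_\Lambda,A)$ provided the cover is good in the sense that all finite intersections $V_{\vec\imath}$ have vanishing higher cohomology with $A$-coefficients; since $A$ is just an abelian group, what one actually needs is that each connected component of each $V_{\vec\imath}$ is cohomologically trivial (a cover by such sets can be arranged, e.g. by taking the $V_i$ to be geodesically convex for a suitable metric, so all intersections are convex hence contractible). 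The central point, established in the proof of Lemma \ref{lemma-on-local-systems}, is that the period map induces a bijection $\pi_0(U_{\vec\imath})\rightarrow\pi_0(V_{\vec\imath})$ for every tuple $\vec\imath$: each component of $V_{\vec\imath}$ contains the image of exactly one component of $U_{\vec\imath}$, and $P$ restricts on that component to a homeomorphism onto a dense open subset (the complement being contained in the countable union of hypersurfaces), hence in particular inducing an isomorphism on $A$-cohomology since the component of $U_{\vec\imath}$ is contractible and so is the component of $V_{\vec\imath}$. Therefore the \v{C}ech complex $\check{C}^\bullet(\U,A)$ for $\fM^0_\Lambda$ is isomorphic, via $P^*$, to the \v{C}ech complex $\check{C}^\bullet(\V,A)$ for $\Omega_\Lambda$: the index set of $n$-cochains is in both cases $\bigsqcup_{\vec\imath}\pi_0(U_{\vec\imath})\cong\bigsqcup_{\vec\imath}\pi_0(V_{\vec\imath})$, with value group $A$ in each entry, and the \v{C}ech differentials are visibly compatible with this identification. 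Passing to cohomology, $P^*:\check{H}^i(\V,A)\rightarrow\check{H}^i(\U,A)$ is an isomorphism, and since both covers are good these compute $\check{H}^i(\Omega_\Lambda,A)$ and $\check{H}^i(\fM^0_\Lambda,A)$ respectively; this gives the claim for the chosen covers, and compatibility with refinements gives it for the direct-limit \v{C}ech cohomology.

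One slightly delicate point I would address: \v{C}ech cohomology is defined as a colimit over refinements, so to conclude I should either (a) argue that the chosen $\U$, $\V$ are cofinal among good covers and that $\check H^*$ of a good cover computes the colimit, or (b) work at the level of cochain complexes for all covers simultaneously — but the cleanest route is to show $\U$ and $\V$ are \emph{good} covers (all components of all finite intersections $A$-acyclic), invoke the standard fact that the \v{C}ech cohomology of a good cover equals $\check H^*(S,A)$ (here $A$ is a constant coefficient group, and for paracompact $S$, or under the Leray-type acyclicity hypothesis, this holds), and then the cochain-level isomorphism above finishes the proof. Constructing such good covers requires only that $\fM^0_\Lambda$ and $\Omega_\Lambda$ be nice topological spaces — $\Omega_\Lambda$ is an open subset of a projective quadric and $\fM^0_\Lambda$ is a (non-Hausdorff) complex manifold, both locally Euclidean, so convexity arguments for a locally chosen Riemannian metric produce the required covers, and one may refine a chosen cover to a good one while preserving the properties that $P|_{U_i}$ is a homeomorphism onto $V_i=P(U_i)$.

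The main obstacle is really the bookkeeping around connected components of multiple intersections and the goodness of the covers: one must be careful that the bijection $\pi_0(U_{\vec\imath})\cong\pi_0(V_{\vec\imath})$ holds for \emph{all} tuples (not just pairs), which follows verbatim from the Global Torelli input and \cite[Lemma 4.10]{verbitsky} exactly as in Lemma \ref{lemma-on-local-systems}, and that on each such component $P$ is a homeomorphism onto a subset whose complement, being contained in a countable union of analytic hypersurfaces, does not affect $A$-cohomology. Once these two facts are in hand the isomorphism $P^*$ on \v{C}ech cohomology is essentially formal.
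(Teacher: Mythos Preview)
Your proposal is correct and follows essentially the same route as the paper: choose coverings $\U=\{U_i\}$ by simply connected open sets on which $P$ is a homeomorphism, set $\V=\{V_i=P(U_i)\}$, invoke the bijection $\pi_0(U_{\vec\imath})\cong\pi_0(V_{\vec\imath})$ from the proof of Lemma~\ref{lemma-on-local-systems} to get an isomorphism of \v{C}ech complexes, and pass to the colimit. The only difference is that the paper dispenses with the good-cover discussion entirely---it simply observes that such $\U$ (resp.\ $\V$) form a cofinal system of coverings of $\fM^0_\Lambda$ (resp.\ $\Omega_\Lambda$), so the isomorphism $\check{C}^\bullet(\V,A)\cong\check{C}^\bullet(\U,A)$ for each such pair already yields the isomorphism on the direct-limit \v{C}ech cohomology; your acyclicity considerations for components of $U_{\vec\imath}$ and $V_{\vec\imath}$ are not needed.
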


\begin{proof}
We keep the notation of the proof of Lemma \ref{lemma-on-local-systems}. 
The simply connected open sets $U$ of $\fM^0_\Lambda$, such that $P$ restricts to $U$ as a homeomorphism, form a basis for the topology of $\fM^0_\Lambda$. 
Given an open covering $\U=\{U_i\}_{i\in I}$, consisting of such open sets, we get an open covering 
$\V:=  \{V_i:=P(U_i)\}_{i\in I}$ of $\Omega_\Lambda$. 
The covering $\U$ is a refinement of the covering $\{P^{-1}(V_i)\}_{i\in I}$, so
we get a pullback homomorphism 
\[
P^*:\check{H}^i(\V,A)\rightarrow \check{H}^i(\U,A).
\]
Every open covering of $\Omega_\Lambda$ admits a refinement by an open covering $\V$ as above. 
Hence, it suffices to prove that the pullback homomorphism displayed above 
is an isomorphism, for such a covering $\U$ of $\fM^0_\Lambda$ and the induced covering $\V$ of $\Omega_\Lambda$. 
The degree $n$ group $\C^n(\U,A)$ in the \v{C}ech complex is 
$\oplus_{\vec{i}\in I^{n+1}}\oplus_{c\in J_{\vec{i}}}\Gamma(U_c,A)$
and $\Gamma(U_c,A)=A$. The natural bijection between the connected components of $U_{\vec{i}}$ and $V_{\vec{i}}$,
observed in the proof of Lemma \ref{lemma-on-local-systems}, implies that 
 the pullback homomorphism $P^*$ induces an isomorphism of \v{C}ech complexes. 
\end{proof}


%
\section{Gerbes}
\label{sec-gerbes}

\begin{defi}
A {\em fibered category} (or a {\em presheaf of categories}) $\FF$ over a topological space $S$ consists of the following.
\begin{enumerate}
\item
A category $\FF(U)$ for each open subset $U$ of $S$.
\item
A functor $i^*:\FF(U)\rightarrow \FF(V)$, for each inclusion $i:V\rightarrow U$ of open sets.
\item
A natural isomorphism $\tau_{ij}:(ij)^*\rightarrow j^*i^*$ for each composition  $W\RightArrowOf{j} V\RightArrowOf{i}U$
of inclusions.
\end{enumerate}
The natural isomorphisms are assumed to satisfy a natural associativity property \cite{giraud},\cite[Def. \nolinebreak 5.2.1]{brylinski}. See also
\cite[Def. 2.1]{moerdijk}.
\end{defi}

\begin{defi}
A  fibered category $\FF$ over  a topological space $S$ is called a {\em prestack} if, for any pair of objects $a, b$ of $\FF(U)$, the presheaf $\SheafHom(a,b)$  over $U$, associating to an inclusion $i:V\rightarrow U$ of an open subset $V$ the set $\Hom_{\FF(V)}(i^*a,i^*b)$, is a sheaf. A prestack $\FF$ is called a {\em stack}, if it satisfies an additional descent condition for objects
\cite{giraud,brylinski} (see also \cite[Def. 2.6]{moerdijk}).
\end{defi}

\begin{defi}
\label{def-gerbe}
(\cite{giraud}, \cite[Def. 5.2.4]{brylinski}, or \cite[Def. 3.1]{moerdijk}).
A {\em gerbe}  over a topological space $S$ is a stack $\GG$ satisfying the following properties.
\begin{enumerate}
\item
\label{def-gerbe-groupoid}
Each category $\GG(U)$ is a groupoid.
\item
\label{non-emptiness-axiom}
$S$ admits a covering by open sets $U$, such that $\GG(U)$ has an object. 
\item
\label{transitivity-axiom}
Given objects $a, b$ of $\GG(U)$, any point $x\in U$ has an open neighborhood $V \subset U$, 
such that $\Hom_{\GG(V)}(i^*a,i^*b)$ is non-empty, where $i:V\rightarrow U$ is the inclusion.
\end{enumerate}
\end{defi}

Let $\GG$ be a gerbe over a topological space $S$, such that for every open set $U$
and every object $a$ of $\GG(U)$, the group $\Aut_{\GG(U)}(a)$ is abelian. 
We define next a sheaf $\Z$ of abelian groups called the {\em band} of $\GG$. 
For a more general definition, for arbitrary gerbes, see \cite{giraud} or \cite[Def. 3.2]{moerdijk}.
We can choose an open covering $\U:=\{U_\alpha\}$ and an object $a_\alpha$ in $\GG(U_\alpha)$, yielding a
sheaf $\SheafAut(a_\alpha)$ over each $U_\alpha$, by Axiom (\ref{non-emptiness-axiom})
of Definition \ref{def-gerbe}. These sheaves admit a canonical gluing to a sheaf $\Z$ of abelian groups as follows.
Choose a covering $\{U_{\alpha\beta}^\xi\}$ of each $U_{\alpha\beta}$, such that there exists an isomorphism 
$f_{\alpha\beta}^\xi$ from the restriction $\restricted{(a_\beta)}{U_{\alpha\beta}^\xi}$
of $a_\beta$ to $U_{\alpha\beta}^\xi$ to the restriction $\restricted{(a_\alpha)}{U_{\alpha\beta}^\xi}$ of $a_\alpha$.
Such a covering exists, by Axioms (\ref{def-gerbe-groupoid}) and  (\ref{transitivity-axiom}) of Definition \ref{def-gerbe}. 
Conjugation by $f_{\alpha\beta}^\xi$ induces a sheaf isomorphism 
\begin{equation}
\label{eq-gluing-transformations-of-the-band}
\lambda_{\alpha\beta}^\xi:\SheafAut\left(\restricted{(a_\beta)}{U_{\alpha\beta}^\xi}\right)
\rightarrow 
\SheafAut\left(\restricted{(a_\alpha)}{U_{\alpha\beta}^\xi}\right).
\end{equation}
Now $\lambda_{\alpha\beta}^\xi$ is independent of the choice of $f_{\alpha\beta}^\xi$, since any other choice 
is of the form $h f_{\alpha\beta}^\xi$, for an element $h$
of the abelian group $\Aut_{\GG(U_{\alpha\beta}^\xi)}\left(\restricted{(a_\alpha)}{U_{\alpha\beta}^\xi}\right)$. 
In particular, $\lambda_{\alpha\beta}^\xi$ and $\lambda_{\alpha\beta}^{\tilde{\xi}}$ agree over overlaps 
$U^\xi_{\alpha\beta}\cap U^{\tilde{\xi}}_{\alpha\beta}$ and define an isomorphism
\[
\lambda_{\alpha\beta}:\SheafAut\left(\restricted{(a_\beta)}{U_{\alpha\beta}}\right)
\rightarrow 
\SheafAut\left(\restricted{(a_\alpha)}{U_{\alpha\beta}}\right).
\]
The isomorphisms $\lambda_{\alpha\beta}$ satisfy the co-cycle condition 
$\lambda_{\alpha\beta}\lambda_{\beta\gamma}\lambda_{\gamma\alpha}=1$
in $\Aut\left(\SheafAut\left(\restricted{(a_\alpha)}{U_{\alpha\beta\gamma}}\right)\right)$, as the left hand side is an inner automorphism of an abelian group.
Hence, the sheaves $\SheafAut(a_\alpha)$ glue to a sheaf $\Band(\GG)$ of abelian groups, called the {\em band} of the gerbe $\GG$. 
The isomorphism class of the sheaf $\Band(\GG)$ is independent of the choice of the covering $\{U_\alpha\}$ and of the objects $\{a_\alpha\}$.

\begin{defi}
Let $\Z$ be a sheaf of abelian groups. A {\em gerbe with band} $\Z$ is a gerbe $\GG$, all of whose objects over all open subsets have abelian automorphism groups, together with an isomorphism $\theta:\Band(\GG)\rightarrow\Z$ of sheaves of abelian groups.
\end{defi}

There is a notion of {\em equivalence} $\phi:\FF\rightarrow \GG$ between stacks $\FF$ and $\GG$ over a topological space $X$, which consists of equivalences of categories $\phi(U):\FF(U)\rightarrow \GG(U)$, for every open subset $U$ of $X$,
satisfying a natural descent condition \cite[Page 199]{brylinski} and \cite[Def. 2.3, and Rem. (i) page 12]{moerdijk}. A {\em morphism} $\phi:(\GG,\theta)\rightarrow(\GG',\theta')$ between
two gerbes $(\GG,\theta)$, $(\GG',\theta')$  with band $\Z$ is an equivalence $\phi:\GG\rightarrow\GG'$
of fibered categories satisfying 
$\theta'\phi=\theta$. Two gerbes $(\GG,\theta)$, $(\GG',\theta')$ with band $\Z$ are said to be {\em equivalent} if there exists a third gerbe $(\FF,\rho)$ with band $\Z$
and morphisms from $(\FF,\rho)$ to each of $(\GG,\theta)$ and $(\GG',\theta')$ \cite[Paragraph after Def. 3.2]{moerdijk}.
The set of equivalence classes of gerbes with abelian band $\Z$ is denoted by $\Gerbes(X,\Z)$. If the pair $(X,\Z)$ has the property that every open covering of $X$ admits a refinement $\U:=\{U_\alpha\}_{\alpha\in I}$, such that 
$H^1(U_{\alpha\beta},\Z)=0$, for all $\alpha,\beta\in I$, then there is a bijective correspondence 
\begin{equation}
\label{eq-Giraud-isomorphism}
\Gerbes(X,\Z)\cong \Check{H}^2(X,\Z),
\end{equation}
with the second \v{C}ech cohomology group with coefficients in $\Z$,
by \cite{giraud}, \cite[Theorem 5.2.8]{brylinski} and  by\footnote{Note that the additional vanishing of the  cohomologies
$H^2(U_{\alpha\beta},Z(K_\alpha)|U_{\alpha\beta})$ 
and $H^1(U_{\alpha\beta\gamma},Z(K_\alpha)|U_{\alpha\beta\gamma})$
with coefficients in the center of a non-abelian band $K$, assumed in \cite[Theorem 3.1]{moerdijk}, is not needed as it is only used in the Remark preceding \cite[Def. 3.2]{moerdijk} to lift a cocycle of outer automorphisms to a cocycle of automorphisms. In our case of an abelian band $\Z$ such a lift is not needed.}  \cite[Theorem 3.1]{moerdijk}. 
The vanishing $H^1(U_{\alpha\beta},\Z)=0$ is satisfied for {\em good open coverings} (for which all non-empty finite intersections are contractible), and abelian local systems $\Z$, and so the above property holds for $(X,\Z)$, whenever $X$ is a Hausdorff manifolds and $\Z$ is an abelian local system. 

\begin{defi}
\label{def-gerbes-with-a-cocycle}
Denote by
$\Gerbes_0(X,\Z)$ the set of equivalence classes of gerbes $\GG$ with band $\Z$ 
such that $X$ admits an open covering
$\U:=\{U_{\alpha}\}_{\alpha\in I}$ with objects $a_\alpha\in \GG(U_\alpha)$, such that 
$\SheafHom(a_\alpha|U_{\alpha\beta},a_\beta|U_{\alpha\beta})$ is the trivial 
$\SheafAut(a_\alpha|U_{\alpha\beta})$-torsor, for all $\alpha,\beta\in I$ with non-empty $U_{\alpha\beta}$.
\end{defi}

If $X$ is a Hausdorff manifold and $\Z$ is an abelian local system, then $\Gerbes_0(X,\Z)=\Gerbes(X,\Z)$,
by the vanishing $H^1(U_{\alpha\beta},\Z)=0$ for a good covering.
Note that once the property in the above Definition holds for an open covering, it holds also for every refinement of this covering.
The above bijection (\ref{eq-Giraud-isomorphism}) associates to a gerbe $(\GG,\theta)$ a \v{C}ech cohomology class represented by a $2$-cocycle 
$g_{\alpha\beta\gamma}:=\theta_\alpha(f_{\alpha\beta}f_{\beta\gamma}f_{\alpha\gamma}^{-1})\in\Z(U_{\alpha\beta\gamma})$
associated to choices of isomorphisms $f_{\alpha\beta}:a_\beta|U_{\alpha\beta}\rightarrow a_\alpha|U_{\alpha\beta}$
\cite[Prop. 3.1]{moerdijk}. The proof of \cite[Theorem 3.1]{moerdijk} constructs, more generally, a bijection $\Gerbes_0(X,\Z)\cong \Check{H}^2(X,\Z)$, for any topological space $X$ and a sheaf of abelian groups $\Z$.

Fix a lattice $\Lambda$ and a finite group $G$.
Let $B$ be an analytic space and $\kappa:B\rightarrow \fM_{\Lambda,G}$ a morphism. 
Consider the fibered category $\kappa^{-1}\GG$ over $B$, which associates to an open set $U\subset B$ the following category
$\kappa^{-1}\GG(U)$. Objects of $\kappa^{-1}\GG(U)$ are triples $(\pi,\eta,\psi)$ consisting of a family $\pi:\X\rightarrow U$ of irreducible
holomorphic symplectic manifolds, an isometric trivialization $\eta:R^2\pi_*\Integers \rightarrow \underline{\Lambda}_U$, and a
trivialization $\psi:\SheafAut_0(\pi)\rightarrow \underline{G}_U$, such that for each point  $u\in U$ the triple
$(X_u,\eta_u,\psi_u)$ represents the isomorphism class corresponding to the point $\kappa(u)$
of $\fM_{\Lambda,G}$. A morphism in
$\Hom_{\kappa^{-1}\GG(U)}((\pi,\eta,\psi),(\tilde{\pi},\tilde{\eta},\tilde{\psi}))$ is an isomorphism 
$f:\X\rightarrow \tilde{\X}$, satisfying $\pi=\tilde{\pi}f$, such that the induced isomorphism of local systems 
$f^*:R^2\tilde{\pi}_*\Integers\rightarrow R^2\pi_*\Integers$ satisfies $\tilde{\eta}=\eta f^*$ and such that 
$\tilde{\psi}=\psi Ad_f$, where $Ad_f:\SheafAut_0(\tilde{\pi})\rightarrow \SheafAut_0(\pi)$ is the isomorphism of sheaves of groups induced by conjugation by $f$.

\begin{lem}
\label{lemma-GG-is-a-gerbe}
The fibered category  $\kappa^{-1}\GG$ is a gerbe over $B$ with band $\underline{Z}_B$, where $Z$ is the center of $G$.
\end{lem}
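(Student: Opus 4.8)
The plan is to verify the three axioms of Definition \ref{def-gerbe} for $\kappa^{-1}\GG$, check that the automorphism groups are abelian so that the band is defined, and then identify that band with $\underline{Z}_B$. First I would establish that $\kappa^{-1}\GG$ is a stack: the descent condition for morphisms amounts to the statement that isomorphisms between families of irreducible holomorphic symplectic manifolds (compatible with $\eta$ and $\psi$) can be glued from local data, which is immediate since such isomorphisms are determined by their restrictions to an open cover and the gluing conditions are local; descent for objects is likewise standard for families of complex manifolds with the extra local-system trivializations, which glue because local sections of a local system that agree on overlaps glue. That each $\kappa^{-1}\GG(U)$ is a groupoid is built into the definition, since every morphism is an isomorphism of families.

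Next I would check the non-emptiness axiom (\ref{non-emptiness-axiom}) and the transitivity axiom (\ref{transitivity-axiom}). For non-emptiness: $\fM_{\Lambda,G}$ is constructed by gluing Kuranishi families $\pi:\X\rightarrow W$ each equipped with a choice of $\eta$ and $\psi$ (as recalled just before Lemma \ref{lemma-M-Lambda-G}); pulling such a chart back along $\kappa$ gives, on a neighborhood $U$ of any point of $B$ mapping into that chart, a triple $(\pi,\eta,\psi)$ over $U$ whose fibers represent the correct isomorphism classes. So $B$ has a cover by open sets over which $\kappa^{-1}\GG$ has an object. For transitivity: given two objects $(\pi,\eta,\psi)$ and $(\tilde\pi,\tilde\eta,\tilde\psi)$ of $\kappa^{-1}\GG(U)$ and a point $x\in U$, both restrict near $x$ to families pulled back from a common Kuranishi chart at $\kappa(x)$; because the Kuranishi family is (versal and) in fact universal for families with trivialized $R^2\pi_*\Integers$ once we quotient by $\Aut_0$ — more precisely, because both triples induce the same classifying map to $\fM_{\Lambda,G}$ — there is, after shrinking $U$ to a neighborhood $V$ of $x$, an isomorphism $f:\restricted{\X}{V}\rightarrow\restricted{\tilde\X}{V}$ over $V$ with $\tilde\eta=\eta f^*$. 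Composing with an element of the local system $\SheafAut_0$, and using the trivializations $\psi,\tilde\psi$, I can further arrange $\tilde\psi=\psi\,Ad_f$, since the two trivializations of $\underline{G}_V$ differ by a locally constant automorphism of $G$ which, after shrinking, is inner (indeed, equal to $Ad_{\psi^{-1}(g)}$ for the appropriate locally constant section $g$ of $\underline{G}_V$) and hence realizable by modifying $f$. This gives a morphism in $\Hom_{\kappa^{-1}\GG(V)}$, verifying (\ref{transitivity-axiom}).

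It then remains to compute the automorphism sheaf of a local object and show it is $\underline{Z}_B$, in particular abelian. For an object $(\pi,\eta,\psi)$ over $U$, an automorphism is an isomorphism $f:\X\rightarrow\X$ over $U$ with $\eta=\eta f^*$ and $\psi=\psi\,Ad_f$; the first condition says $f$ acts trivially on $R^2\pi_*\Integers$, i.e. $f$ is a section of the local system $\SheafAut_0(\pi)$, and the second says that conjugation by $f$ is trivial on $\SheafAut_0(\pi)$, i.e. via $\psi$ that $f$ lands in the center $Z$ of $G$. Thus $\SheafAut(\pi,\eta,\psi)\cong\psi^{-1}(\underline{Z}_U)=\underline{Z}_U$ as sheaves of groups, which is abelian. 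Moreover these identifications are canonical — they use only $\psi$, not any choice of gluing isomorphism $f_{\alpha\beta}^\xi$ — so on overlaps the transition maps $\lambda_{\alpha\beta}$ of \eqref{eq-gluing-transformations-of-the-band} between the sheaves $\SheafAut(a_\alpha)$ correspond, under these identifications with $\underline{Z}$, to conjugation by elements of $\underline{G}$ acting on the \emph{center} $Z$, hence are the identity; therefore the band glues to the trivial sheaf $\underline{Z}_B$.

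The main obstacle is the transitivity axiom (\ref{transitivity-axiom}): it is where the actual geometry enters, via the (local) universality of the Kuranishi family together with the bookkeeping needed to upgrade a $\eta$-compatible local isomorphism of families to a $(\eta,\psi)$-compatible one by correcting with a section of $\SheafAut_0(\pi)$. One has to be careful that the correction term genuinely exists as a \emph{holomorphic} (equivalently, locally constant) section of $\SheafAut_0$ after shrinking, which is where Huybrechts' finiteness of $\Aut_0$ and the local-system structure of $\SheafAut_0(\pi)$ from \cite{hassett-tschinkel-lagrangian-planes} are used. The stack axioms and the band computation are then routine.
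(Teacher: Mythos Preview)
Your proposal is correct and follows essentially the same approach as the paper---verify the gerbe axioms and identify the band via $\psi$---but two organizational differences are worth noting. First, the paper immediately reduces to the case $\kappa=\mathrm{id}$ on $\fM_{\Lambda,G}$, invoking the general fact that the inverse image of a gerbe with abelian band is again a gerbe with the pulled-back band \cite[Prop.~5.2.6]{brylinski}; this spares you from redoing the argument over an arbitrary $B$. Second, your verification of the transitivity axiom is more laborious than necessary: rather than first producing an $\eta$-compatible isomorphism and then correcting it by a section of $\SheafAut_0$ to achieve $\psi$-compatibility, the paper simply takes a fiberwise isomorphism of \emph{triples} at $x$ (which exists since both triples represent $\kappa(x)$) and extends it to a family isomorphism over a neighborhood using that the Kuranishi family is universal for each of its fibers \cite[Ch.~I, (10.3), (10.6)]{BHPV}; compatibility with both $\eta$ and $\psi$ is then automatic because these are locally constant data agreeing at $x$. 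Your correction argument does work (the key point being that the discrepancy automorphism of $G$ is inner because the fiber triples at $x$ are isomorphic), but the direct route avoids it entirely.
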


\begin{proof}
It suffices to prove that the fibered category $\GG$ over $\fM_{\Lambda,G}$, associated to the identity morphism, is a gerbe with band $\underline{Z}$, 
as the more general $\kappa^{-1}\GG$ described above is simply the inverse image of $\GG$ via $\kappa$ and is thus a gerbe with band $\underline{Z}_B$, 
by \cite[Prop. 5.2.6]{brylinski}.
Property (\ref{def-gerbe-groupoid}) of Definition \ref{def-gerbe} holds, by definition of morphisms in $\GG$.
The vector space $H^0(X,TX)$ vanishes for every irreducible holomorphic symplectic manifold $X$, and
the dimension of $H^1(X,TX)$ is the Hodge number $h^{1,1}(X)$. Hence, the versal Kuranishi family of $X$ is universal 
\cite[Ch. I, Theorems (10.3) and (10.5)]{BHPV}. Every point $t:=(X_0,\eta_0)$ of $\fM_\Lambda$ admits a simply connected open neighborhood $U$, over which we have a universal family of deformations of $X_0$, as  $\fM_\Lambda$ is constructed by gluing such families \cite[Prop. 4.3]{huybrechts-torelli}. The same holds for $\fM_{\Lambda,G}$, 
by Lemma \ref{lemma-M-Lambda-G}.
Let $b$ be a point of $\fM_{\Lambda,G}$ and let $(X_b,\eta_b,\psi_b)$ represent the isomorphism class $b$. 
Choose a simply connected open neighborhood $U\subset \fM_{\Lambda,G}$ of $b$, over which
we have a universal family $\pi:\X\rightarrow U$ of deformations of $X_b$. The local systems 
$R^2\pi_*\Integers$ and $\SheafAut_0(\pi)$ are trivial, since $U$ is simply connected, and so $\eta_b$ and $\psi_b$
extend to an isometric trivialization $\eta:R^2\pi_*\Integers \rightarrow \underline{\Lambda}_U$ and 
a trivialization $\psi:\SheafAut_0(\pi)\rightarrow \underline{G}_U$. 
We get the  object $(\pi,\eta,\psi)$ in $\GG(U)$.
Property
(\ref{non-emptiness-axiom})  of Definition \ref{def-gerbe} follows. 
Property (\ref{transitivity-axiom})   of Definition \ref{def-gerbe} holds, since the universal family is universal for each of its fibers
(see \cite[Ch. I, Theorems (10.3) and (10.6) and the Remark following (10.6)]{BHPV}).

Given an open subset $U$ of $\fM_{\Lambda,G}$ and an 
object $a:=(\pi,\eta,\psi)$ in $\GG(U)$, the sheaf $\SheafAut_{\GG(U)}(a)$ is isomorphic to 
the center of $\SheafAut_0(\pi)$ and 
$\psi$ restricts to an isomorphism from the center of $\SheafAut_0(\pi)$ onto $\underline{Z}_U$. 
The isomorphisms induced by the $\psi$'s are compatible with the gluing transformations
$\lambda_{\alpha\beta}^\xi$ in (\ref{eq-gluing-transformations-of-the-band}), by definition of morphisms in $\GG$. 
Hence, the band of $\GG$ is isomorphic to $\underline{Z}$.
\end{proof}

\begin{lem}
\label{lemma-GG-is-represented-by-a-cocycle}
The gerbe $\GG$ over $\fM_{\Lambda,G}$ satisfies the property in Definition \ref{def-gerbes-with-a-cocycle} and is hence represented by a class $[\GG]$ in $\Check{H}^2(\fM_{\Lambda,G},\underline{Z})$.
\end{lem}

\begin{proof}
It suffices to consider the restriction of $\GG$ to a connected component $\fM^0_{\Lambda,G}$ of $\fM_{\Lambda,G}$.
Set $\tilde{P}:=P\circ\phi:\fM^0_{\Lambda,G}\rightarrow \Omega_\Lambda$. Let $\U:=\{U_i\}_{i\in I}$ be an open covering of $\fM^0_{\Lambda,G}$, such that 
there exists over each $U_i$ a universal family $\pi_i:\X_i\rightarrow U_i$ with trivializations $\eta_i$ of $R^2\pi_{i,*}\Integers$ and $\psi_i$ of $\Aut_0(\pi_i)$ and 
$\tilde{P}$ restricts to each $U_i$ as a homeomorphism $\tilde{P}:U_i\rightarrow V_{j(i)}:=\tilde{P}(U_i)$,
where $j:I\rightarrow J$ is a function and 
$\{V_j\}_{j\in J}$ is a good covering of $\Omega_\Lambda$. 
Such a covering exists, since $\Omega_\Lambda$ is a Hausdorff manifold, and so every open covering of $\Omega_\Lambda$ admits a refinement by a good covering.
Then  $V_{j(i_1)}\cap V_{j(i_2)}$ is simply connected, for all $i_1,i_2\in I$, but  the intersection $U_{i_1}\cap U_{i_2}$ need not be simply connected. 

Denote by $\X_{i_1}|V_{j(i_1)}\cap V_{j(i_2)}$ the restriction to $V_{j(i_1)}\cap V_{j(i_2)}$ of the pullback via $\tilde{P}^{-1}:V_{j(i_1)}\rightarrow U_{i_1}$ 
of  $\X_{i_1}$. 
Define $\X_{i_2}|V_{j(i_1)}\cap V_{j(i_2)}$
similarly. 
The morphisms $\pi_{i_1}$ and $\pi_{i_2}$ are weakly K\"{a}hler, by \cite[Lemma 4.14]{bakker-lehn}, and hence so is the fiber product 
\[
\pi_{i_1i_2}:\X_{i_1i_2}\rightarrow V_{j(i_1)}\cap V_{j(i_2)}
\]
of $\X_{i_1}|V_{j(i_1)}\cap V_{j(i_2)}$ and $\X_{i_2}|V_{j(i_1)}\cap V_{j(i_2)}$
over $V_{j(i_1)}\cap V_{j(i_2)}$, by \cite[(2.2)]{fujiki-1982}. 
Let $\D\rightarrow V_{j(i_1)}\cap V_{j(i_2)}$ be the relative Douady space of $\pi_{i_1i_2}$.
We refer to \cite[Def. 2.3]{bakker-lehn} for the definition of {\em weakly K\"{a}hler}. All we need is that it is the property required to conclude that every irreducible component  of $\D$ is proper over  $V_{j(i_1)}\cap V_{j(i_2)}$, by the main theorem of \cite{fujiki-1984}.
Let $\D_0$ be the (finite) union of irreducible components which contain points parametrizing the graph of an isomorphism 
$f_t:\X_{i_1,t}\rightarrow \X_{i_2,t}$, $t\in U_{i_1}\cap U_{i_2}$, compatible with the markings $\eta_{i_k,t}$ and $\psi_{i_k,t}$, $k=1,2$.
Then $\D_0$ intersects the inverse image of $\tilde{P}(U_{i_1}\cap U_{i_2})$ in a
$\underline{Z}$-torsor over $\tilde{P}(U_{i_1}\cap U_{i_2})$ (where we identify the restriction of $\underline{Z}$ to $U_{i_1}$ with the center of $\SheafAut_0(\pi_{i_1})$ via $\psi_{i_1}$). 
Equivalently, $\D_0$ is a principal $Z$-bundle over $\tilde{P}(U_{i_1}\cap U_{i_2})$.
A priori this $Z$-bundle extends to a $Z$-bundle of graphs of isomorphisms
away from a closed analytic subset of $V_{j(i_1)}\cap V_{j(i_2)}$, by the properness\footnote{More precisely, we use here also the properness of the universal subscheme of $\D_0\times \X_{i_1i_2}$ over $\D_0$ and the fact that the locus of points of the universal subscheme, where it is not an isomorphism, is closed.} of $\D_0$ over $V_{j(i_1)}\cap V_{j(i_2)}$. 
This closed analytic subset is $V_{j(i_1)}\cap V_{j(i_2)}\setminus \tilde{P}(U_{i_1}\cap U_{i_2})$, by definition of $\fM_{\Lambda,G}$.

Let $C$ be a smooth and connected Riemann surface in $V_{j(i_1)}\cap V_{j(i_2)}$, which intersects 
$\tilde{P}(U_{i_1}\cap \nolinebreak U_{i_2})$. The inverse image $\tilde{C_0}$ of $C_0:=C\cap \tilde{P}(U_{i_1}\cap U_{i_2})$ in $\D_0$ extends, abstractly, to a branched cover $\tilde{C}$ of $C$. 
Note that $C$ intersects $V_{j(i_1)}\cap V_{j(i_2)}\setminus \tilde{P}(U_{i_1}\cap U_{i_2})$ in a union of isolated points, as the intersection is a closed proper analytic subset, by the  discussion in the previous paragraph.
The triviality of the 
$Z$-bundle $\D_0$ is equivalent to that of the associated representation of the fundamental group of $U_{i_1}\cap U_{i_2}$ in the abelian group $Z$. 
The latter would follow once we show that for every such $C$ the morphism $\tilde{C}\rightarrow C$
is in fact unramified.
Associated to every point $\tilde{t}$ of $\tilde{C}\setminus \tilde{C}_0$, over $t\in [C\setminus C_0]$, is a limiting cycle
$\Gamma_{\tilde{t}}+\sum_k Y_{\tilde{t},k}$
in $\X_{i_1,t}\times \X_{i_2,t}$,  such that $\Gamma_{\tilde{t}}$ is the graph of a bimeromorphic map as well as  the unique summand dominating the fibers $\X_{i_1,t}$ and
$\X_{i_2,t}$, by the proof of  \cite[Theorem 4.3]{huybrechts-basic-results}. 

The group $Z$
acts on $\tilde{C}_0$ and this action is free and transitive on all fibers of $\tilde{C}_0$ over $C_0$.
Hence, $Z$
acts transitively on
the set of limiting cycles associated to points in the same fiber of $\tilde{C}\rightarrow C$. We claim that the latter action is free.
Indeed, if $g$ is a bimeromorphic map from $\X_{i_1,t}$ to $\X_{i_2,t}$ and $f_1$, $f_2$ are elements of
$\Aut_0(\X_{i_1,t})$, such that $gf_1=gf_2$, then $f_1=f_2$.
Hence, $\tilde{C}$ is unramified over $C$.
\end{proof}

\begin{rem}
\label{rigidification} Jenia Tevelev pointed out to me that 
gerbes arise naturally from the construction of the 
{\em rigidification of a stack} (see \cite[Sec. 5.1]{ACV}). In the above lemma the coarse moduli space $\fM_{\Lambda,G}$ is the rigidification of the stack $\GG$. The construction of rigidification of a stack was used by Gorchinskiy and Viviani to reprove the classical result that  a universal family exists over the open subset of the coarse moduli space of hyperelliptic curves of genus $g$ without extra automorphisms apart from the hyperelliptic involution, if and only if $g$ is odd \cite[Prop. 4.7]{GV}. 
\end{rem}

%
\section{Vanishing of the cohomology class of a gerbe}
\label{sec-triviality-of-a-gerbe}

Fix a lattice $\Lambda$ isometric to the Beauville-Bogomolov-Fujiki lattice of some irreducible holomorphic symplectic manifold.
Let $W$ be a three dimensional positive definite subspace of $\Lambda_\RealNumbers$. 
Let $Q_W\subset \PP(W_\ComplexNumbers)$ be the conic of isotropic lines in $W_\ComplexNumbers:=W\otimes_\RealNumbers\ComplexNumbers$.
Denote by $\iota:Q_W\rightarrow \Omega_\Lambda$ the inclusion into the period domain (\ref{eq-period-domain}).

\begin{lem}
\label{lemma-the-period-domain-is-homotopic-to-the-2-sphere}
The inclusion $\iota$ is a homotopy equivalence.
\end{lem}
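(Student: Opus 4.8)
The plan is to exhibit $\Omega_\Lambda$ as deformation retracting onto $Q_W$, which suffices since a deformation retract is a homotopy equivalence and $\iota$ is the inclusion. First I would recall the linear-algebra description of the period domain: a point $\ell\in\Omega_\Lambda$ is a complex line in $\Lambda_\ComplexNumbers$ with $(\ell,\ell)=0$ and $(\ell,\bar\ell)>0$; equivalently, writing $\ell = \ComplexNumbers\cdot(x+iy)$ with $x,y\in\Lambda_\RealNumbers$, the conditions say $(x,x)=(y,y)>0$ and $(x,y)=0$, so the real two-plane $\langle x,y\rangle\subset \Lambda_\RealNumbers$ is positive definite and oriented. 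Thus $\Omega_\Lambda$ is identified with the Grassmannian $\mathrm{Gr}^+(2,\Lambda_\RealNumbers)$ of oriented positive-definite two-planes in the space $\Lambda_\RealNumbers$ equipped with the indefinite form of signature $(3,b_2-3)$; under this identification $Q_W$ is the locus of oriented positive two-planes contained in the fixed positive-definite three-space $W$, which is a copy of $\mathrm{Gr}^+(2,W)\cong S^2$.

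The key step is to build the retraction $\mathrm{Gr}^+(2,\Lambda_\RealNumbers)\to \mathrm{Gr}^+(2,W)$. I would fix the orthogonal splitting $\Lambda_\RealNumbers = W\oplus W^\perp$, where the form is positive definite on $W$ and negative definite on $W^\perp$. Given a positive-definite oriented two-plane $P$, consider the orthogonal projection $p_W: P\to W$. Because $P$ is positive definite and $W^\perp$ is negative definite, $P\cap W^\perp = 0$, so $p_W$ is injective and $P_t := $ graph of $t\cdot(\text{the linear map } P\to W^\perp \text{ defining }P)$ interpolates, for $t\in[0,1]$, between $P=P_1$ and $p_W(P)=P_0\subset W$. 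Concretely, choose the unique basis of $P$ of the form $\{w_1+v_1,\,w_2+v_2\}$ with $w_i\in W$, $v_i\in W^\perp$ and $\{w_1,w_2\}$ a suitable reference frame; set $P_t := \mathrm{span}\{w_1+(1-t)v_1,\,w_2+(1-t)v_2\}$, inheriting the orientation. One must check $P_t$ remains positive definite for all $t$: this is immediate since the Gram matrix of $P_t$ is that of $\{w_i\}$ (positive definite) plus $(1-t)^2$ times the Gram matrix of $\{v_i\}$ (negative semidefinite); positive-definiteness of the plane $P$ ($t=0$ analog after using that the $w_i$ are independent — which holds since $p_W$ is injective) forces the $W$-part to dominate, and decreasing the negative contribution only helps. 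This gives a continuous map $H:\mathrm{Gr}^+(2,\Lambda_\RealNumbers)\times[0,1]\to\mathrm{Gr}^+(2,\Lambda_\RealNumbers)$ with $H_1=\mathrm{id}$, $H_0$ landing in $\mathrm{Gr}^+(2,W)=Q_W$, and $H_t$ fixing $Q_W$ pointwise (for $P\subset W$ all $v_i=0$). Hence $Q_W$ is a deformation retract of $\Omega_\Lambda$ and $\iota$ is a homotopy equivalence.

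The main obstacle is checking that $H$ is well-defined and continuous, i.e.\ that the plane $P_t$ does not depend on the chosen basis of $P$ and varies continuously with $P$; the cleanest way to handle this is to avoid bases entirely. For a positive-definite two-plane $P$, let $A_P: W_P\to W^\perp$ be the linear map whose graph is $P$, where $W_P := p_W(P)\subset W$ is the two-dimensional image (two-dimensional precisely because $p_W|_P$ is injective). Then $P = \{w + A_P(w): w\in W_P\}$, and I set $P_t := \{w + (1-t)A_P(w): w\in W_P\}$, which is manifestly basis-independent, clearly equals $P$ at $t=1$ and $W_P\subset W$ at $t=0$, and depends continuously on $(P,t)$ because $W_P$ and $A_P$ do (they are obtained from $P$ by the orthogonal projections $p_W$, $p_{W^\perp}$ and an inversion of the isomorphism $p_W|_P$, all continuous on the open set of positive-definite planes). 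The orientation on $P_t$ is transported from $P$ via the linear isomorphism $w+A_P(w)\mapsto w+(1-t)A_P(w)$, which is continuous in $t$ and never degenerates. With $H$ so defined the verification that each $P_t$ is positive definite is the short Gram-matrix computation indicated above, and the lemma follows. One could alternatively cite that the period domain $\Omega_\Lambda$ is a Riemannian symmetric space $SO(3,b_2-3)^+/(SO(2)\times SO(1,b_2-3))$ which is well known to be diffeomorphic to $S^2\times \RealNumbers^{2(b_2-3)}$ with the $S^2$ factor a totally geodesic $Q_W$, but the explicit retraction above is self-contained and makes the compatibility with $\iota$ transparent.
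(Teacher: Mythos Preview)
Your proof is correct and takes a genuinely different route from the paper's. The paper argues via an incidence correspondence $I\subset \Omega_\Lambda\times Gr_+(3,\Lambda_\RealNumbers)$: the projection $q:I\to Gr_+(3,\Lambda_\RealNumbers)$ is a fibration with fiber $Q_W$ over a contractible base (contractibility of $Gr_+(3,\Lambda_\RealNumbers)$ coming from Cartan's theorem that a semisimple Lie group is topologically the product of a maximal compact subgroup and a Euclidean space), so the fiber inclusion $\tilde{\iota}:Q_W\hookrightarrow I$ is a homotopy equivalence; and the other projection $p:I\to\Omega_\Lambda$ has contractible fibers (each a hyperbolic space, the projectivized positive cone in $V^\perp$), hence is also a homotopy equivalence; then $\iota=p\circ\tilde{\iota}$. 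Your argument is instead a direct, explicit deformation retraction of $\Omega_\Lambda\cong\mathrm{Gr}^+(2,\Lambda_\RealNumbers)$ onto $\mathrm{Gr}^+(2,W)\cong Q_W$, obtained by linearly scaling down the $W^\perp$-component of the graph map $A_P$ and checking via the Gram matrix that positivity is preserved throughout. The paper's approach situates the lemma in its natural symmetric-space context and imports the contractibility of the relevant homogeneous spaces; yours is more elementary and entirely self-contained, needing neither Lie theory nor fibration arguments. One cosmetic remark: your parametrization drifts between the conventions $P_1=P$ and $P_0=P$ in consecutive sentences (for instance, the displayed formula $P_t=\{w+(1-t)A_P(w):w\in W_P\}$ actually gives $P_0=P$ and $P_1=W_P$, contrary to what you then assert); this is a harmless labeling slip and does not affect the mathematics.
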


\begin{proof}
Let $Gr_+(3,\Lambda_\RealNumbers)$ be the Grassmannian of positive definite three dimensional subspaces of 
$\Lambda_\RealNumbers$.  The identity component $SO_+(\Lambda_\RealNumbers)$ of the special orthogonal group
acts transitively on $Gr_+(3,\Lambda_\RealNumbers)$. The stabilizer of $W$ in $SO_+(\Lambda_\RealNumbers)$ 
is $SO(W)\times SO(W^\perp)$ realizing $Gr_+(3,\Lambda_\RealNumbers)$ as the quotient 
$SO_+(\Lambda_\RealNumbers)/[SO(W)\times SO(W^\perp)]$. Now $SO(W)\times SO(W^\perp)$ is a maximal compact subgroup of $SO_+(\Lambda_\RealNumbers)$ and so the latter is topologically the product of the former and a Euclidean space, by Cartan's Theorem
\cite[Theorem 2]{mostow}. Hence, $Gr_+(3,\Lambda_\RealNumbers)$ is contractible. 

A point of the period domain  $\Omega_\Lambda$ corresponds to an isotropic line $\ell\subset \Lambda_\ComplexNumbers$, such that
$[\ell+\bar{\ell}]\cap\Lambda_\RealNumbers$ is a positive definite two-dimensional subspace $V$ of $\Lambda_\RealNumbers$. 
The natural identification of the two dimensional real vector spaces $\ell$ and $V$ endows the latter with an orientation. This construction 
identifies the period domain $\Omega_\Lambda$ with the Grassmannian of oriented positive definite two-dimensional subspaces of $\Lambda_\RealNumbers$ \cite[Sec. 4.2]{huybrechts-period-domains}.
Let $I\subset \Omega_\Lambda\times Gr_+(3,\Lambda_\RealNumbers)$ be the incidence correspondence. 
The fiber of  the projection $q:I\rightarrow Gr_+(3,\Lambda_\RealNumbers)$ over $W$ is $Q_W$.
The inclusion $\tilde{\iota}:Q_W\rightarrow I$ of the fiber is a homotopy equivalence, since $q$
is a fibration over a contractible base. 
The fiber of the projection $p:I\rightarrow \Omega_\Lambda$ over an oriented two dimensional subspace 
$V\subset \Lambda_\RealNumbers$ is the projectivization of the positive cone in the subspace $V^\perp$
of signature $(1,b_2-3)$, i.e. a hyperbolic space. Hence, $p$ is a homotopy equivalence, being a fibration with contractible fibers. 
We conclude that $\iota=p\circ \tilde{\iota}$ is the composition of two homotopy equivalences and so is such as well.
\end{proof}

Let $X$ be an irreducible holomorphic symplectic manifold and $\omega$ a K\"{a}hler class on $X$. 
Set $V:=[H^{2,0}(X)\oplus H^{0,2}(X)]\cap H^2(X,\RealNumbers)$
and set $W:=V+\RealNumbers\omega$. Then $W$ is a positive definite three dimensional subspace of 
$H^2(X,\RealNumbers)$. Let $I$ be the complex structure of $X$.
There exists a unique Ricci flat hermetian metric $g$ on $X$, whose imaginary part is a K\"{a}hler form representing the class $\omega$, by Yau's proof of the Calabi conjecture \cite{beauville-varieties-with-zero-c-1}. Furthermore, there exists two additional complex structures $J$ and $K$, covariantly constant  with respect to the Riemannian metric which is the real part of the hermetian metric $g$, such that $IJ=K$. The identity, $I$, $J$, and $K$ span a subalgebra of endomorphisms of the real tangent bundle, which is isomorphic to the algebra $\HH$ of quaternions. The two-sphere 
\[
Tw_\omega := \{aI+bJ+cK \ : \ a^2+b^2+c^2=1\},
\]
of purely imaginary unit quaternions, consists of integrable complex structures. 
The Riemannian metric and each of these complex structures $I_t\in Tw_\omega$ determine a K\"{a}hler form $\omega_t$ on
the manifold $\overline{X}$ underlying  $X$, hence a Hodge structure. Denote by $X_t$ the complex manifold  
$(\overline{X},I_t)$. 
The map 
$\kappa:Tw_\omega\rightarrow \PP(H^2(\overline{X},\ComplexNumbers))$, sending $I_t$ to $H^{2,0}(X_t)$, is a diffeomorphic embedding
whose image is $Q_W$ \cite[1.17]{huybrechts-basic-results}. Endow $Tw_\omega$ with the complex structure of $Q_W$, which is isomorphic to the complex projective line. We get
a complex structure on the smooth manifold $\X:=\overline{X}\times Tw_\omega$, such that the projection 
\begin{equation}
\label{eq-twistor-family}
\pi:\X\rightarrow Tw_\omega 
\end{equation}
is holomorphic, and the fiber of $\pi$ over $I_t\in Tw_\omega$ is isomorphic to 
$(\overline{X},I_t)$ \cite[Sec. 3(F)]{hklr}. The above family is known as the {\em twistor family} associated to the K\"{a}hler form $\omega$. 

Choose a marking $\eta_0:H^2(X,\Integers)\rightarrow \Lambda$. It extends uniquely to an isometric trivialization
$\eta:R^2\pi_*\Integers\rightarrow \underline{\Lambda}_{Tw_\omega}$, since $Tw_\omega$
is simply connected. We get an embedding 
\[
\kappa_{\pi,\eta}:Tw_\omega\rightarrow \fM^0_\Lambda
\]
into the connected component of $\fM_\Lambda$ containing $(X,\eta_0)$.

\begin{lem}
\label{lemma-pullback-by-kappa-is-an-isomorphism}
The pullback homomorphism $\kappa_{\pi,\eta}^*: \check{H}^*(\fM^0_\Lambda,A)\rightarrow \check{H}^*(Tw_\omega,A)$
is an isomorphism, for every abelian group $A$.
\end{lem}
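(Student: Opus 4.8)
The plan is to factor the classifying morphism $\kappa_{\pi,\eta}$ through the period map $P$ and then invoke the two isomorphism statements already at hand, Lemma \ref{lemma-pullback-by-the-period-map-is-an-isomorphism} and Lemma \ref{lemma-the-period-domain-is-homotopic-to-the-2-sphere}. First I would compute the composition $P\circ\kappa_{\pi,\eta}:Tw_\omega\rightarrow\Omega_\Lambda$. By construction $\kappa_{\pi,\eta}$ sends $I_t\in Tw_\omega$ to the isomorphism class of the marked pair $(X_t,\eta_t)$, where $\eta_t$ is the restriction to the fiber over $t$ of the trivialization $\eta$ extending $\eta_0$. The underlying manifold of every $X_t$ is the fixed $\overline{X}$, and $R^2\pi_*\Integers$ is the constant local system over the simply connected base $Tw_\omega$, so the isometry $\eta_t\otimes\ComplexNumbers:H^2(X_t,\ComplexNumbers)\rightarrow\Lambda_\ComplexNumbers$ is independent of $t$ and equals $\eta_0\otimes\ComplexNumbers$. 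Hence $P(\kappa_{\pi,\eta}(I_t))=\eta_t(H^{2,0}(X_t))=(\eta_0\otimes\ComplexNumbers)(H^{2,0}(X_t))$, so that under the isometry $\PP(H^2(\overline{X},\ComplexNumbers))\cong\PP(\Lambda_\ComplexNumbers)$ induced by $\eta_0$, the map $P\circ\kappa_{\pi,\eta}$ is identified with the twistor parametrization $\kappa$ of the conic $Q_W$, where $W:=([H^{2,0}(X)\oplus H^{0,2}(X)]\cap H^2(X,\RealNumbers))+\RealNumbers\omega$ is a positive definite three dimensional subspace. Since $\eta_0$ is an isometry, $Q_W$ maps onto the conic $Q_{W'}\subset\Omega_\Lambda$ attached to the positive definite three dimensional subspace $W':=\eta_0(W)\subset\Lambda_\RealNumbers$. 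Thus $P\circ\kappa_{\pi,\eta}=\iota\circ c$, where $c:Tw_\omega\rightarrow Q_{W'}$ is the diffeomorphism obtained from $\kappa$ and $\eta_0$, and $\iota:Q_{W'}\hookrightarrow\Omega_\Lambda$ is the inclusion of Lemma \ref{lemma-the-period-domain-is-homotopic-to-the-2-sphere}.

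Next I would apply $\check{H}^*(-,Z)$ to this factorization, obtaining
\[
\kappa_{\pi,\eta}^*\circ P^*=(P\circ\kappa_{\pi,\eta})^*=c^*\circ\iota^*
\]
as homomorphisms from $\check{H}^*(\Omega_\Lambda,Z)$ to $\check{H}^*(Tw_\omega,Z)$. The homomorphism $P^*$ is an isomorphism by Lemma \ref{lemma-pullback-by-the-period-map-is-an-isomorphism}; the homomorphism $c^*$ is an isomorphism because $c$ is a diffeomorphism; and $\iota^*$ is an isomorphism because $\iota$ is a homotopy equivalence by Lemma \ref{lemma-the-period-domain-is-homotopic-to-the-2-sphere}, the spaces $Q_{W'}$ and $\Omega_\Lambda$ being paracompact, locally contractible complex manifolds, so that their \v{C}ech cohomology with coefficients in $Z$ agrees with singular cohomology and is therefore a homotopy invariant. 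Consequently $\kappa_{\pi,\eta}^*=(c^*\circ\iota^*)\circ(P^*)^{-1}$ is a composition of isomorphisms, hence an isomorphism, for every abelian group $Z$.

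The substantive step is the first paragraph: verifying that the period map genuinely intertwines the classifying morphism $\kappa_{\pi,\eta}$ with the twistor parametrization of the conic $Q_W$. This amounts to unwinding the constructions of \cite[1.13 and 1.17]{huybrechts-basic-results}, the one point to check being that the trivialization $\eta$ used to define $\kappa_{\pi,\eta}$ is the constant one extending $\eta_0$, which is precisely why $\eta_t$ does not vary with $t$. Everything after that is formal manipulation with the functoriality of \v{C}ech cohomology together with homotopy invariance of singular (equivalently \v{C}ech) cohomology of manifolds, combined with the two lemmas proved earlier.
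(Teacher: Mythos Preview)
Your argument is correct and follows essentially the same route as the paper's proof: factor $P\circ\kappa_{\pi,\eta}$ as the embedding of $Tw_\omega$ onto the conic $Q_{\eta_0(W)}\subset\Omega_\Lambda$, then combine Lemma~\ref{lemma-the-period-domain-is-homotopic-to-the-2-sphere} (for $\iota^*$) with Lemma~\ref{lemma-pullback-by-the-period-map-is-an-isomorphism} (for $P^*$) to conclude that $\kappa_{\pi,\eta}^*=\iota^*\circ(P^*)^{-1}$ is an isomorphism. You are simply more explicit than the paper in separating off the diffeomorphism $c$ and in justifying why a homotopy equivalence induces an isomorphism on \v{C}ech cohomology.
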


\begin{proof}
The composition $P\circ\kappa_{\pi,\eta}:Tw_\omega\rightarrow \Omega_\Lambda$ with the period map $P$ 
is the embedding of the base $Tw_\omega$ of the twistor family as the conic 
$Q_{\eta(W)}$ of isotropic lines in the complexification of 
the positive three dimensional subspace $\eta(W)$ of $\Lambda_\RealNumbers$.
Set $\iota:=P\circ\kappa_{\pi,\eta}$. The pullback 
$\iota^*\nolinebreak :\nolinebreak 
\check{H}^*(\Omega_\Lambda,A)\rightarrow \check{H}^*(Tw_\omega,A)$ is an isomorphism, 
by Lemma \ref{lemma-the-period-domain-is-homotopic-to-the-2-sphere}, and the pullback
$P^*\nolinebreak : \nolinebreak \check{H}^*(\fM^0_\Lambda,A)\rightarrow \check{H}^*(\Omega_\Lambda,A)$ 
is an isomorphism, by Lemma \ref{lemma-pullback-by-the-period-map-is-an-isomorphism}. Hence, 
$\kappa_{\pi,\eta}^*=\iota^*\circ(P^*)^{-1}$ is an isomorphism as well.
\end{proof}

\begin{proof}[{\bf Proof of Theorem \ref{thm-main}}]
Choose some twistor family (\ref{eq-twistor-family}). 
The local system $\SheafAut_0(\pi)$ over $Tw_\omega$  is trivial, as the latter is simply connected. 
Choose a trivialization $\psi:\SheafAut_0(\pi)\rightarrow \underline{G}_{Tw_\omega}$, let
$
\kappa_{\pi,\eta,\psi}:Tw_\omega\rightarrow \fM_{\Lambda,G}
$
be the classifying morphism, and let $\fM_{\Lambda,G}^0$ be the connected component 
containing its image. Let $Z$ be the center of $G$.  Then 
$\kappa_{\pi,\eta,\psi}^*:\check{H}^*(\fM^0_{\Lambda,G},Z)\rightarrow \check{H}^*(Tw_\omega,Z)$
is an isomorphism, by Lemmas \ref{lemma-pullback-by-kappa-is-an-isomorphism} and \ref{lemma-M-Lambda-G}.

Let $\GG$ be the gerbe over $\fM^0_{\Lambda,G}$ associated to the identity morphism from 
$\fM^0_{\Lambda,G}$ to itself as in Lemma \ref{lemma-GG-is-a-gerbe}. 
Let $\FF:=\kappa_{\pi,\eta,\psi}^{-1}\GG$ be the gerbe over $Tw_\omega$ associated to 
the morphism $\kappa_{\pi,\eta,\psi}$ as in Lemma \ref{lemma-GG-is-a-gerbe}. 
Equivalence classes of gerbes over $Tw_\omega$ with band a sheaf $\Z$ of abelian groups are in bijection with
cohomology classes in $\check{H}^2(Tw_\omega,\Z)$ \cite{giraud}, \cite[Theorem 5.2.8]{brylinski}. 
The equivalence class $[\FF]$ of  $\FF$ in $\check{H}^2(Tw_\omega,Z)$ vanishes, since 
we have the object $(\pi,\eta,\psi)$ in $\FF(Tw_\omega)$. 
Similarly, the gerbe $\GG$ is represented by a class $[\GG]$ in $\check{H}^2(\fM^0_{\Lambda,G},Z)$,
by Lemma \ref{lemma-GG-is-represented-by-a-cocycle}.
The morphism
$\kappa_{\pi,\eta,\psi}$ pulls back the class $[\GG]$ to $[\FF]$.
Hence, $\kappa_{\pi,\eta,\psi}^*([\GG])$ vanishes. We conclude that the class $[\GG]$ vanishes, 
since $\kappa_{\pi,\eta,\psi}^*$ is an isomorphism. Consequently, $\GG(\fM^0_{\Lambda,G})$ has an object, by
\cite[III.2.1.1.2]{giraud}.

The set of isomorphism classes of objects of $\GG(\fM^0_{\Lambda,G})$ is a torsor for the group 
$\check{H}^1(\fM^0_{\Lambda,G},Z)$, by \cite[Prop. 5.2.5]{brylinski}.
The latter is the trivial group, by Lemmas \ref{lemma-pullback-by-kappa-is-an-isomorphism} and \ref{lemma-M-Lambda-G}.
Hence, $\GG(\fM^0_{\Lambda,G})$ has a unique object, up to isomorphism. The forgetful morphism 
$\phi$ restricts to an  isomorphism $\phi:\fM^0_{\Lambda,G}\rightarrow \fM^0_\Lambda$ 
(Lemma \ref{lemma-M-Lambda-G}), transferring the object of
$\GG(\fM^0_{\Lambda,G})$ to a $\Lambda$-marked family $(\pi:\X\rightarrow \fM^0_\Lambda,\eta)$ over $\fM^0_\Lambda$. 

It remains to prove the universal property of $(\pi,\eta)$. 
Let $(\tilde{\pi}:\tilde{\X}\rightarrow B,\tilde{\eta})$ be a $\Lambda$-marked family over a connected analytic space $B$, let $\kappa:B\rightarrow \fM_\Lambda$ be the classifying morphism, and let $\fM_\Lambda^0$ be the connected component containing its image. 
The family $\pi:\X\rightarrow \fM^0_\Lambda$ is locally universal,
as it restricts to a universal Kuranishi family over an open neighborhood of each point of $\fM^0_\Lambda$. 
Hence, the $\Lambda$-marked families $(\tilde{\pi},\tilde{\eta})$ and $(\kappa^*(\pi),\kappa^*(\eta))$ are locally isomorphic. 
Let $\P:=\SheafIsom((\tilde{\pi},\tilde{\eta}),(\kappa^*(\pi),\kappa^*(\eta)))$ be the local system of isomorphisms of the
two families, which are compatible with the markings. Then $\P$ is an $\SheafAut_0(\tilde{\pi})$ torsor and 
$(\kappa^*(\pi),\kappa^*(\eta))$ is isomorphic to $(\tilde{\pi},\tilde{\eta})\times\P$.
\end{proof}

\begin{rem}
\label{rem-rigidified}
Let $X$ be an irreducible holomorphic symplectic manifold, 
$\Diff(X)$ its diffeomorphism group, and $\Diff_0(X)$ the subgroup of elements isotopic to the identity. 
An easy sufficient criterion for the existence of a universal family over the connected component of Teichm\"{u}ller space 
is the triviality of the intersection $\Aut(X)\cap \Diff_0(X)$ for every $X$ in this connected component \cite[Sec. 1.4]{catanese}. Such $X$ is called {\em rigidified} \cite[Def. 12]{catanese}.
\end{rem}

\begin{rem}
\label{rem-principal-G-bunle-invariant}
Let $(\tilde{\pi}:\tilde{\X}\rightarrow B,\tilde{\eta})$ be a $\Lambda$-marked family  over a connected analytic space $B$, let $\kappa:B\rightarrow \fM_\Lambda^0$ be the classifying morphism, and let 
$(\kappa^*(\pi):\kappa^*\X\rightarrow B,\kappa^*(\eta))$ be the pullback of the universal family. 
We can choose a trivialization 
$\psi:\SheafAut_0(\kappa^*(\pi))\rightarrow \underline{G}_B$, by Lemma \ref{lemma-M-Lambda-G}.
Thus, the orbit of the isomorphism class of the principal $G$-bundle 
$\P_{(\tilde{\pi},\tilde{\eta})}:=\SheafIsom((\kappa^*(\pi),\kappa^*(\eta)),(\tilde{\pi},\tilde{\eta}))$, under the group $\Out(G)$ of outer automorphisms, is an invariant of the 
$\Lambda$-marked family $(\tilde{\pi},\tilde{\eta})$. 
Let us reconstruct this invariant more directly under the assumption that the fibers of $\tilde{\pi}$ are rigidified (Remark \ref{rem-rigidified}). 
Fix a fiber $X$ of $\tilde{\pi}$. Let $\Diff'(X)$ be the subgroup of $\Diff(X)$ fixing the connected component of Teichm\"{u}ller space containing $X$ and acting trivially on $H^2(X,\Integers)$. 
$\Gamma:=\Diff'(X)/\Diff_0(X)$ is a subgroup of the mapping class group $\Diff(X)/\Diff_0(X)$.
The natural homomorphism
$h:\Aut_0(X)\rightarrow \Gamma$ is an isomorphism. Indeed, $h$ is injective, as $X$ is assumed to be rigidified, and surjective by
\cite[Theorem 4.26(iii) and Cor. 4.31]{verbitsky} 
(the groups $\Gamma$ and $\Aut_0(X)$ are denoted by  $G_I$ and $K_I$ in \cite{verbitsky}). 
The $\Lambda$-marked family 
$(\tilde{\pi},\tilde{\eta})$ is differentiably locally trivial. Hence, there exists an open covering $\{U_i\}$ of $B$ and trivializations 
$\restricted{\tilde{X}}{U_i}\cong X\times U_i$, such that the gluing transformations are given by continuous maps from $U_i\cap U_j$ to $\Diff'(X)$. These gluing transformations yield a principal  $\Diff'(X)$-bundle, hence a principal $\Gamma$-bundle, hence a principal $\Aut_0(X)$-bundle, which coincides with the one described above.
\end{rem}

%

{\bf Acknowledgements:}
This work was partially supported by a grant  from the Simons Foundation (\#427110). 
I thank Zhiyuan Li for his interesting talk, and for 
asking whether universal families exist over Teichm\"{u}ller spaces of holomorphic symplectic  manifolds, at the workshop
``Hyper-K\"{a}hler Manifolds, Hodge Theory, and Chow Groups''
at the  Tsinghua Sanya International Mathematics Forum in December 2016.
I thank the organizers, Radu Laza, Kieran O'Grady, and Claire Voisin for the invitation to this interesting and instructive workshop. I thank Daniel Huybrechts and Sukhendu Mehrotra for helpful communications and for their comments on an earlier draft of this note. I thank
Jenia Tevelev for Remark \ref{rigidification}. I thank the referee for his numerous insightful comments and suggestions.

\end{document}